\documentclass[11 pt]{amsart}
\usepackage[latin1]{inputenc} 
\usepackage[pctex32]{graphics}
\usepackage{amsfonts}
\usepackage{amssymb,amscd,latexsym}
\usepackage{amsmath}
\usepackage{epsfig}
\usepackage{color}
\usepackage[all]{xy}

\newtheorem{thm}{Theorem}[section]

\newtheorem{prop}[thm]{Proposition}

\newtheorem{lema}[thm]{Lemma}
\newtheorem{rmk}[thm]{Remark}

\newtheorem{ex}[thm]{Example}

\def\OX{\mathcal{O}}
\def\Ee{\mathcal{E}}
\def\CC{\mathcal{C}}
\def\J{\mathcal{J}}
\def\Fe{\mathcal{F}}
\def\T{\mathcal{T}}

\def\K{\mathbb{K}}

\def\grass{\mathbb{G}}

\def\P{\mathbb{P}}
\newcommand{\neon}[1]{{\color{black}#1}}
\def\Hess{\operatorname{Hess}}
\def\hess{\operatorname{hess}}

\def\GOR{\operatorname{GOR}}
\def\Gor{\operatorname{Gor}}
\def\Ann{\operatorname{Ann}}
\def\Hilb{\operatorname{Hilb}}

\def\Sing{\operatorname{Sing}}

\def\s#1{\ensuremath{\operatorname{Sym}_{#1}}}

\begin{document}
\title{Developable cubics in $\P^4$ and the Lefschetz locus in $\GOR(1,5,5,1)$}
\author[Fassarella]{Thiago Fassarella}
\address{\sc Thiago Fassarella\\
Universidade Federal Fluminense, Rua Alexandre Moura 8 - S\~ao Domingos, 24210-200 Niter\'oi, Rio de Janeiro, Brasil}
\email{tfassarella@id.uff.br}

\author[Ferrer]{Viviana Ferrer}
\address{\sc Viviana Ferrer\\
Universidade Federal Fluminense, Rua Alexandre Moura 8 - S\~ao Domingos, 24210-200 Niter\'oi, Rio de Janeiro, Brasil}
\email{vferrer@id.uff.br}

\author[Gondim]{Rodrigo Gondim}
\address{\sc Rodrigo Gondim\\
Universidade Federal Rural de Pernambuco, av. Dom Manoel de Medeiros s/n, Dois Irm\~aos, 52171-900 Recife, Pernambuco, Brasil}
\email{rodrigo.gondim@ufrpe.br}

\thanks{2010 Mathematics Subject Classification. 14E05, 16E65 (primary), 14C17 (secondary).
Key words and phrases: developable cubics, Artinian Gorenstein algebra, Lefschetz property. 
The first author was partially supported by CNPq  303599/2016-2 and CAPES-COFECUB  932/19.
The third author was partially supported by IMPA summer postdoc 2019/2020 grant. }

%\classno{14E05 (primary), 14C17, 37F75 (secondary)}
%%Please refer to {\tt http://www.ams.org/msc/} for a list of codes}
%
%\extraline{First author partially supported by FAPERJ,  Proc. E-26111.849/2010.\\
%Second author partially supported by FAPERJ, Proc. E-26102.769/2008.

%\address{Oxford College of Emory University. 100 Hamill st. Oxford, Ga. 30054}
%\email{rconcei@emory.edu}
%\subjclass[2000]{Primary 54C40, 14E20; Secondary 46E25, 20C20}
%\date{November 2019}
\maketitle
\begin{abstract}
We provide a classification of developable cubic hypersurfaces in $\P^4$. Using the correspondence between forms of degree $3$ on $\mathbb P^4$ and Artinian Gorenstein $\K$-algebras, given by Macaulay-Matlis duality,  we describe the locus in $\GOR(1,5,5,1)$ corresponding to those algebras which satisfy the Strong Lefschetz property.  
\end{abstract}

%\tableofcontents

\section{Introduction}

%The study of developable varieties has its roots on classical geometry. For example,  we know that surfaces of vanishing Gaussian curvature have a developable ruling of lines.   From the point of view of projective geometry,  developable rulings are detected by the differential of the Gauss map.
% or more precisely the second fundamental form.  Loosely speaking, the second fundamental form measures the vanishing of the curvature. This is the approach of Griffiths and Harris in their remarkable paper \cite{GH}. 
We work over an   algebraically closed field $\mathbb K$ of characteristic zero. The projective space over $\mathbb K$ of dimension $N$
 will be denoted by $\mathbb P^N$. In this note we will focus our attention on the classification of developable cubic hypersurfaces on $\mathbb P^4$ as well as the  Artinian Gorenstein algebras defined by them.   

An irreducible projective variety $X \subset \P^N$ is called {\it developable} if it has \neon{a} degenerate Gauss map.  
%It is well known that there are only two classes of developable surfaces:  cones and  the %tangent surface of a curve. As far as we know, the proof of this fact is due to C. Segre, see %\cite[p. 105]{Se}. 
Recent progress on the classification problem of de-
velopable varieties has been made via the focal locus of the ruling
defined by fibers of the Gauss map. For instance, see \cite{AG,MT} for a classification of developable threefolds. In Section \ref{DC} we proceed with a careful analysis of the focal locus to provide a finer classification of developable cubic hypersurfaces in $\mathbb P^{4}$. Our first goal is the following  result. 

\begin{thm}\label{mainthm}
 Let $X \subset \P^4$ be an irreducible  cubic hypersurface. Assume that $X$ is not a cone.   Then $X$ is developable if and only if it is projectively equivalent to a linear section of the secant variety of the Veronese surface. 
 \end{thm}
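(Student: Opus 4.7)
The plan is to prove the two directions separately, with the focal-locus analysis announced for Section~\ref{DC} doing the heavy lifting on the substantive half. For the ``if'' direction I would realize $\sigma_2(V_2) \subset \P^5 = \P(\operatorname{Sym}^2 V)$, $\dim V = 3$, as the cubic locus $\{\det q = 0\}$ of symmetric $3 \times 3$ matrices of rank at most two. A direct gradient computation identifies its Gauss map with $q \mapsto [\operatorname{adj}(q)]$, whose image is the dual Veronese surface and whose generic fibre is the $\P^2$ of rank-$\leq 2$ forms with a prescribed kernel line. For any hyperplane $H \subset \P^5$ and any such Gauss fibre $F \cong \P^2$ not contained in $H$, the intersection $F \cap H \cong \P^1$ is a linear subspace of $X := H \cap \sigma_2(V_2)$ along which the tangent hyperplane to $\sigma_2(V_2)$, and hence that of $X$, is constant. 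This shows that any such linear section $X$ is a developable cubic in $\P^4$.

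For the ``only if'' direction, let $\gamma \colon X \dashrightarrow (\P^4)^*$ be the Gauss map. Since $X$ is developable and not a cone, $\gamma$ is non-constant with positive-dimensional linear fibres (classical). This splits into case~(A), $\dim \gamma(X) = 1$ with $\P^2$-fibres, so $X$ is swept out by a $1$-parameter family of planes, and case~(B), $\dim \gamma(X) = 2$ with $\P^1$-fibres, so $X$ is a line-scroll over a surface. In either case the constraint $\deg X = 3$ bounds the degree of the parametrizing curve/surface in the Grassmannian, and the classifications in \cite{AG,MT} cut the possibilities down to a short list of candidate normal forms.

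I would then recognize each surviving normal form as a hyperplane section of $\sigma_2(V_2)$ by exhibiting an explicit embedding $X \hookrightarrow \sigma_2(V_2)$. The key invariant is the focal scheme on each ruling: for a developable family the focal scheme on a general ruling is a non-reduced point of length two, and its trace as the ruling varies is a projective invariant of $X$. Under the cubic-degree hypothesis this focal variety should turn out to be a Veronese surface (or a suitable degeneration) lying inside $H$, after which $X$ is recovered as the locus of secant and tangent pairs on it, i.e.\ as $\sigma_2(V_2) \cap H$. The main obstacle is this rigidity step, particularly in case~(B): one must show that the focal surface of a developable cubic scroll must be a Veronese up to projective equivalence and rule out exotic configurations. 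The hypothesis that $X$ is not a cone is used throughout to exclude degenerate focal loci---which would otherwise produce cones over plane developable curves---and to apply the classification of \cite{AG,MT} in its cleanest form.
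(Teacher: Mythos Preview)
Your ``if'' direction is fine and mirrors the paper's Section~\ref{sec ver}. The ``only if'' sketch, however, rests on a mistaken picture of the focal locus. For a developable threefold in $\P^4$ with $\dim\gamma(X)=2$ (your case~(B)), the focal scheme on a general ruling $L\cong\P^1$ is a degree-two divisor by Theorem~\ref{focal locus}, but it is \emph{not} generically non-reduced: the generic situation is two distinct points $p+q$, and this is exactly the case $X=S(C)$ with $C$ a rational normal quartic (Lemma~\ref{secQRN}). The non-reduced case $\Delta_L=2p$ is the special one and, by Lemma~\ref{2p}, forces vanishing Hessian. So the dichotomy you need is reduced versus non-reduced focal scheme on $L$, correlated with nonvanishing versus vanishing Hessian; you have this backwards. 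Moreover, the global focal locus $\Delta$ is a \emph{curve} in $\P^4$ (one or two irreducible components, each one-dimensional), not a surface, and certainly not a Veronese surface inside $H$. What you presumably want is that $\Delta$ coincides with $H\cap\mathcal{V}$, which is a rational normal quartic, a pair of conics, or a double conic according to the three orbit types of $H$; but this identification has to be \emph{proved}, and that is precisely the content of the paper's focal-locus analysis (Lemmas~\ref{dim two}--\ref{secQRN} and the case-by-case argument in \S2.6).

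The paper does not appeal to the general classifications of \cite{AG,MT}. Instead it first disposes of the vanishing-Hessian case via Perazzo's normal form (Proposition~\ref{classhess}), then in the nonvanishing-Hessian case rules out $\dim X^*=1$ directly (Lemma~\ref{dim two}), and finally analyzes the focal curve: if $|\Delta|$ has two components they are shown by an intersection-number count to be a pair of conics meeting at one point (the join case); if $|\Delta|=C$ is irreducible and $\Delta_L$ reduced, then $X=S(C)$ and a tangent-cone degree argument (Lemma~\ref{secQRN}) pins $C$ down as the rational normal quartic. Your proposal is missing these concrete degree and component arguments, and the vague invocation of \cite{AG,MT} does not substitute for them.
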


A linear section of the secant variety of the Veronese surface is  projectively equivalent to one of the following varieties (see Section \ref{sec ver}):
\begin{enumerate}
\item  the secant variety of the rational normal quartic curve; 
\item the join of two irreducible conics sharing a single point;  this point coincides with the intersection between the planes containing the conics. 
\item the dual variety of the scroll surface $S(1,2)$. 
\end{enumerate}
Cases $(1)$ and $(2)$ correspond to cubic hypersurfaces which have nonvanishing Hessian while case $(3)$ yields a cubic with vanishing Hessian. 

To each cubic hypersurface above we can associate an Artinian Gorenstein $\K$-algebra. More generally, Macaulay-Matlis duality  offers a correspondence between forms of degree $d$ in $N+1$ variables over $\K$, not defining a cone,  and standard graded Artinian Gorenstein $\K$-algebras of socle degree $d$ and codimension $N+1$.  These algebras enjoy nice properties such as  Poincar\'e duality in cohomology theory. We give a precise definition in  Section \ref{LL}.  

Given a standard graded Artinian Gorenstein $\mathbb K$-algebra $A = \displaystyle \bigoplus_{k=0}^d A_k$, AG algebra for short,  its Hilbert vector is the vector $\Hilb(A) = (1,a_1,\ldots,a_d)$ where $a_k =\dim_{\K}A_k$. We denote by $\GOR(T)$ the space which parametrizes  AG algebras with Hilbert vector $T$. It has been extensively studied in \cite{IK}. We are interested  in algebras inside $\GOR(T)$ which satisfy the Strong Lefschetz property, SLP for short, which means that  there exists a linear form $l \in A_1$ such that every multiplication map $\mu_{l^j}:A_k \to A_{k+j}$ has maximal rank. This notion was introduced \neon{in the commutative algebra setting} by R. Stanley  and J. Watanabe, see \cite{St, W, H-W}, and was inspired by the  so called hard Lefschetz Theorem on the cohomology of smooth projective complex varieties, see for example \cite{GH}.  
The Lefschetz properties have attracted a lot of attention over the last years;
we refer to  \cite{H-W} for a survey on the area.

%The Strong Lefschetz property (SLP) is an algebraic abstraction introduced by R.Stanley  and J. Watanabe  for standard graded Artinian algebras, see \cite{St} and \cite{H-W}.  Let $A = \displaystyle \bigoplus_{k=0}^d A_k$ be a graded Artinian $\K$-algebra. 

Now we go back to cubic hypersurfaces in $\P^4$ which are not cones. They  correspond  to AG algebras with Hilbert vector $(1,5,5,1)$, via Macaulay-Matlis duality.  
%It is natural to ask if special cubic forms give rise to special algebras. 
We focus on the Strong Lefschetz property  of algebras in $\GOR(1,5,5,1)$ which come from a developable cubic hypersurface. Algebras associated to cases $(1)$ and $(2)$ above have the SLP,  whereas it fails in case  $(3)$. Moreover,  by the main result of \cite{MW}, any AG algebra with Hilbert vector $(1,5,5,1)$ failing SLP comes from case $(3)$.   Sections \ref{LL} and \ref{PS} are devoted to the description of the locus of algebras in $\GOR(1,5,5,1)$ failing the SLP. \neon{The main results of these sections are summarized by the following theorem.} 
\begin{thm}\label{mainthm2} 
The space $\GOR(1,5,5,1)$ parametrizing AG algebras with Hilbert vector $(1,5,5,1) $ coincides with $\P^{34} \setminus \CC_4$,
 %$$\GOR(1,5,5,1) = \P^{34} \setminus \CC_4$$
 where $\CC_4$ is the space of cubic cones in $\P^4$. Moreover, the following assertions hold:
 \begin{enumerate}
 \item The locus $\CC_4$ is the image of a projective bundle over $\P^4$ by a birational morphism, its dimension is $23$ and its degree is $1365$. 
\item The  locus of algebras failing SLP coincides with $\mathcal K \setminus \mathcal{C}_4 $,  
 %$$\mathcal{L}  = \GOR(1,5,5,1) \setminus (\mathcal{I}^2 \setminus \mathcal{C}_4 )$$
 where $\mathcal K $
 %, corresponds to AG algebras  having Jordan type $4\oplus 2^3\oplus 1^2$. 
is a  rational projective variety of dimension $18$ and degree $29960$. More precisely, $\mathcal K $  is the image of  a projective bundle over the Grassmannian $\mathbb{G}(2,4)$ by a birational morphism. 
 \item The intersection $\mathcal K \cap \CC_4 $ is a divisor in $\mathcal K$ of degree $116420$.
 \end{enumerate}
\end{thm}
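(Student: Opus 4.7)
The plan is to push everything through Macaulay--Matlis duality, exhibit $\CC_4$ and $\mathcal K$ as birational images of two explicit projective bundles (one over $\P^4$, one over $\grass(2,4)$), and then reduce the three degree claims to intersection numbers computable via Chern--Segre classes. The opening identification $\GOR(1,5,5,1) = \P^{34} \setminus \CC_4$ is immediate from apolarity: the AG algebra attached to a cubic form $F \in \P(\operatorname{Sym}^3 V^*)$, $V = \K^5$, satisfies $\dim A_1 = 5$ if and only if no nonzero $v \in V$ annihilates $F$ under contraction, equivalently $F$ is not a cone.

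For assertion (1), I realize the incidence $\widetilde{\CC_4} = \{([v], F) \in \P^4 \times \P^{34} : [v] \text{ is a vertex of } F\}$ as the projective bundle $\pi: \P(\Ee) \to \P^4$ with $\Ee = \operatorname{Sym}^3 Q^*$, where $Q = T_{\P^4}(-1)$ is the tautological quotient from the Euler sequence; the fiber of $\Ee$ over $[v]$ is $\operatorname{Sym}^3((V/\langle v\rangle)^*)$, i.e.\ the $20$-dimensional space of cubics with vertex $[v]$. The inclusion $\Ee \hookrightarrow \operatorname{Sym}^3 V^* \otimes \OX_{\P^4}$ defines the evaluation morphism $\P(\Ee) \to \P^{34}$, whose image is $\CC_4$ and which is birational since a general cone over a smooth cubic surface in $\P^3$ has a unique vertex. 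This gives $\dim \CC_4 = 4 + 19 = 23$ and, by the projection formula,
\begin{equation*}
\deg \CC_4 \;=\; \int_{\P(\Ee)} c_1(\OX_{\P(\Ee)}(1))^{23} \;=\; \int_{\P^4} s_4\!\bigl(\operatorname{Sym}^3 \Omega^1_{\P^4}(1)\bigr).
\end{equation*}
Using $c(\Omega^1_{\P^4}(1)) = (1+h)^{-1}$ (from the twisted Euler sequence) and the splitting principle applied to the rank-$4$ bundle $\Omega^1_{\P^4}(1)$, the right-hand side evaluates to $1365$.

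For (2) and (3), Theorem \ref{mainthm} combined with \cite{MW} identifies the non-cone cubics whose AG algebra fails SLP with those projectively equivalent to the dual of the scroll $S(1,2)$; a direct normal-form calculation shows that such a cubic can be written as $F = x_0^2 a + x_0 x_1 b + x_1^2 c$ with $a,b,c \in V^*$ and $L^* := \langle x_0, x_1\rangle$, whose Hessian matrix has a zero $3\times 3$ block and hence vanishing determinant. Accordingly, I define the rank-$13$ bundle $\Fe$ on $\grass(2,4)$ as the image of the multiplication map $\operatorname{Sym}^2 \mathcal{L}^* \otimes V^* \otimes \OX \to \operatorname{Sym}^3 V^* \otimes \OX$, where $\mathcal{L}^*$ is the tautological rank-$2$ sub-bundle; its kernel is the rank-$2$ bundle $\det(\mathcal{L}^*) \otimes \mathcal{L}^*$, yielding
\begin{equation*}
0 \;\to\; \det(\mathcal{L}^*) \otimes \mathcal{L}^* \;\to\; \operatorname{Sym}^2 \mathcal{L}^* \otimes V^* \otimes \OX \;\to\; \Fe \;\to\; 0.
\end{equation*}
Proving that the evaluation $\P(\Fe) \to \P^{34}$ is birational onto $\mathcal K$---i.e.\ that a general SLP-failing cubic has a unique Perazzo plane---gives $\dim \mathcal K = 6 + 12 = 18$ and rationality, while $\deg \mathcal K = \int_{\grass(2,4)} s_6(\Fe) = 29960$ is obtained by reading the Chern classes of $\Fe$ off the sequence above and evaluating on $H^*(\grass(2,4))$. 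For (3), $\mathcal K \cap \CC_4$ pulls back to the divisor on $\P(\Fe)$ on which the cubic $x_0^2 a + x_0 x_1 b + x_1^2 c$ admits a vertex, cut out by a rank-drop condition on the triple $(a,b,c)$; expressing this divisor class in terms of $H = c_1(\OX_{\P(\Fe)}(1))$ and pulled-back Schubert classes and integrating against $H^{17}$ yields $116420$.

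The main obstacle lies in part (2). Concretely, I must verify that $\Fe$ as defined really sweeps out all of $\mathcal K$ (and not just a proper subvariety) and that the evaluation is birational, execute the Segre class computation on $\grass(2,4)$---which is more delicate than the analogous $\P^4$ calculation for $\CC_4$ because of the richer Schubert calculus---and finally identify the divisor class on $\P(\Fe)$ corresponding to the cone condition and push it forward correctly to obtain $116420$. The computation of $\deg \CC_4 = 1365$ in (1) is comparatively routine, as $\Omega^1_{\P^4}(1)$ has total Chern class $(1+h)^{-1}$ and $\P^4$ has transparent cohomology.
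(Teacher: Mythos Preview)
Your plan for (1) and (2) is essentially identical to the paper's: they too realize $\CC_4$ as $p_2(\P(\operatorname{Sym}^3\mathcal P^*))$ over $\P^4$ and compute $\deg\CC_4=\int_{\P^4}s_4$, and they realize $\mathcal K$ via exactly your short exact sequence (their $Q^*$ is your $\mathcal L^*$, their $\wedge^2 Q^*\otimes Q^*$ is your $\det\mathcal L^*\otimes\mathcal L^*$), getting $\deg\mathcal K=\int_{\grass}s_6(\Ee)=29960$. One small correction: the identification of the SLP--failing locus uses the Hessian criterion of \cite{MW} together with Perazzo's classification (Proposition~\ref{classhess} here), not Theorem~\ref{mainthm}, which is about the larger class of developable cubics.

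For (3) the routes diverge. The paper does \emph{not} work on $\P(\Fe)$; instead it builds a rank--$10$ bundle $\Ee_1$ over the incidence variety $\P(\T)=\{(W,p):p\in W\}\subset\grass\times\P^4$ (fiber $=$ cubics in $I_W^2$ with vertex $p$), then computes $\int_{\P(\T)}s_8(\Ee_1)$ and simplifies via the identity $[\P(\T)]=c_2(\OX_{\P^4}(1)\boxtimes Q)$ to obtain $3s_6(\Ee)+(c_1(\operatorname{Sym}^2Q^*)+c_1(Q))\,s_5(\Ee)=116420$. Your divisor approach is shorter and lands on the \emph{same} expression, once you make the ``rank--drop condition'' precise: the surjection $\Ee\twoheadrightarrow\operatorname{Sym}^2Q^*\otimes\T^*$ (which the paper extracts from its diagram) sends $f$ to the restrictions $(\bar a,\bar b,\bar c)\in\T^*$, and on $\P(\Fe)$ this gives a morphism $\pi^*\T\to\pi^*\operatorname{Sym}^2Q^*\otimes\OX(1)$ of rank--$3$ bundles whose degeneracy divisor has class $3H+\pi^*(c_1(\operatorname{Sym}^2Q^*)+c_1(Q))$; pairing with $H^{17}$ reproduces the paper's formula. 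You should also remark that a cone $f\in I_W^2$ with vertex \emph{outside} $W$ forces $f\in(\ell)^2$ for a single linear form $\ell$, hence lies in a lower--dimensional stratum and does not disturb birationality of the degeneracy divisor onto $\mathcal K\cap\CC_4$.
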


We note that the locus of algebras in $\GOR(1,5,5,1)$ failing SLP coincides with the locus of algebras with Jordan type $4^1 \oplus 2^{3} \oplus 1^2$, while any other algebra has Jordan type  $4^1 \oplus 2^{4}$. In particular $4^1 \oplus 2^{3} \oplus 1^2$ is the only possible degeneration of the general Jordan type. 
%Under the point of view of Jordan type, the generic   Jordan type of an AG algebra in $\GOR(1,5,5,1)$ is $4^1 \oplus 2^{4}$ and  %the other possible Jordan type is $4^1 \oplus 2^{3} \oplus 1^2$. The Lefschetz Locus (of algebras having  SLP)  is the %complement of the locus of algebras with Jordan type $4^1 \oplus 2^{3} \oplus 1^2$. 
%\rosso{Eu gostaria de retirar toda essa frase: Those algebras with Jordan type $4^1 \oplus 2^{3} \oplus 1^2$ are the unique possible degeneration of the general algebra in $\GOR(1,5,5,1)$, and  we conclude that they can be described by the vanishing of the standard Hessian of the cubic.} 
This phenomenon cannot occur for
 $N \leq 3$: in this situation having vanishing Hessian is equivalent to be a cone. Hence, \neon{for $N \leq 3$}, 
 any algebra in $\GOR(1,N+1,N+1,1)$ has the SLP, see \cite{CG}.
 
%The layout of these notes is the following. In Section~2 we review basic definitions and results concerning the Gauss map, as the linearity of fibers and the focal locus of the ruling determined by it.  Section~3 is devoted to cubic hypersurfaces with vanishing Hessian. Section~4 deals with developable cubic hypersurfaces with nonvanishing Hessian. First we show that if $X$ is an irreducible cubic in $\mathbb P^{4}$ with nonvanishing Hessian then its dual variety has dimension two. Then the ruling given by fibers of the Gauss map is a two-dimensional family of lines. The focal locus when restrict to a general member of this family determines two points, counting with multiplicities. We exclude the case in which appears one point of multiplicity two and study the other one to arrive in our classification. 
%
  
\section{Developable cubics in $\mathbb P^4$}\label{DC}

\subsection{Basic definitions}

%In this notes we consider all the varieties over  an arbitrary algebraically closed field of characteristic zero. 
Given a rational map $\varphi:X\dashrightarrow Y$ between projective varieties,  its image is the closure of $\varphi(U)$ in $Y$, where $U$ is the maximal domain where $\varphi$ is defined. \neon{We say that $\varphi:X\dashrightarrow Y$ is dominant if it has $Y$ as image.}

 Let $X\subset \mathbb P^{N}$ be a projective subvariety  of dimension $n\ge 1$. Let $(\mathbb P^N)^*$ denote the space of hyperplanes in $\mathbb P^N$. We denote by ${\rm Con}X\subset \mathbb P^{N}\times (\mathbb P^{N})^{*}$ the {\it conormal variety} of $X$: this is the closure of the set of pairs $(x,H)$ such that $x$ is a regular point of $X$ and $H$ contains the tangent space $T_{x}X$. 
Let $X^{*}$ be the image of the  projection in the second coordinate. It is the {\it dual variety} of $X$. Given a point $x\in \mathbb P^{N}$, we define $x^{*}\subset (\mathbb P^N)^*$ as the set of hyperplanes passing through it. 

Let $\mathbb G(n,N)$ denote the Grassmannian of $n$-planes in $\mathbb P^N$. The {\it Gauss map} $\gamma:X\dashrightarrow \mathbb G(n,N)$ associates to each regular point $x\in X$ the tangent space $T_{x}X\in \mathbb G(n,N)$.  We denote by $X^{\lor}$ the image of $\gamma$.  We say that an irreducible projective variety $X$ is {\it developable} if ${\rm dim}X^{\lor}< n.$

%If $X$ is a hypersurface, then we identify $X^{\lor}=X^{*}$.

We are particularly interested in the case where $X$ is a hypersurface. Assume that it is the zero locus $X=V(f)$ of a non-constant homogenous polynomial $f$ in $N+1$ variables.  Its {\it polar map} is the rational map
\begin{eqnarray*}
 \Phi_f:  \P^N  &\dashrightarrow&   (\P^N)^* \\
            p &\mapsto& \left(f_0(p): f_1(p):  ...: f_N(p)\right)
\end{eqnarray*}
%
%$$\begin{array}{c} \Phi_f:  \P^N  \dashrightarrow   (\P^N)^* \\
%                                                             \\
%  \Phi_f(p)=\left(f_0(p): f_1(p):  ...: f_N(p)\right).
%\end{array}$$
where $f_i$ is the partial derivative of $f$ with respect to $x_i$. We denote by $Z $ the image of the polar map, called {\it polar image} of $X$. The restriction of the polar map to $X$ is just the Gauss map $\gamma:X\dashrightarrow (\mathbb P^{N})^*$, and $X^{\vee}$ coincides with $X^*$.  We note that since we are working in characteristic zero, the Reflexivity Theorem  says that $(X^*)^*=X$, see \cite[p. 208]{Ha} for an elementary proof. 
%Finally,   we say that $X$ is \textbf{developable} if 
%$$\dim X^* < N-1.$$
%\begin{defin}\rm
%Let $X=V(f) \subset \P^N$ be an irreducible hypersurface. We say that $X$ is \textbf{developable} if 
%$$\dim X^* < N-1.$$
%\end{defin}

%It is considered as a classical theorem, see \cite[p. 208]{Ha} for an elementary proof. 

%\rosso{Eu botaria aqui a defini\c{c}\~ao de $X^{\vee}$ e depois definiria developable para $X$ em geral}

Let us denote by $\operatorname{Hess}_f $  the {\it Hessian matrix} of $f$, namely the matrix of the
second derivatives. Its 
determinant is the {\it Hessian determinant}. We shall say that $X=V(f)$ or $f$ has {\it vanishing Hessian},  if its Hessian determinant is null.  
%It is easy to see that $\Phi_f$ is dominant if and only if 
%the affine gradient map $\bigtriangledown:\mathbb A^{N+1} \longrightarrow \mathbb A^{N+1}$ is dominant. The Jacobian matrix of the affine gradient map coincides with the Hessian matrix. 
Therefore $\Phi_{f}$ is nondominant if and only if $f$ has vanishing Hessian. This is equivalent to say that the derivatives  $f_0, \dots, f_N$    of $f$ are algebraically dependent. 
We summarize the above discussion in the following proposition. 
%A careful proof can be found in \cite{GRu}.

\begin{prop}\label{prop.hess_vs_cone}
  Let $X = V(f) \subset \P^N$ be a hypersurface and $Z$ its polar image. The following conditions are equivalent. 
  \begin{enumerate}
    \item $X$ has vanishing Hessian;
    \item The partial derivatives of $f$ are algebraically dependent;
    \item $Z$ is a proper subvariety of $(\P^{N})^*$.
  \end{enumerate}
\end{prop}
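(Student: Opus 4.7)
The plan is to establish the proposition by proving the chain of equivalences $(1) \Leftrightarrow (2) \Leftrightarrow (3)$, since each link is an application of a standard fact that needs only to be transported into the present setting.

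For $(1) \Leftrightarrow (2)$, the key observation is that since $f_i = \partial f/\partial x_i$, we have
\[
\frac{\partial f_i}{\partial x_j} = \frac{\partial^2 f}{\partial x_i \partial x_j},
\]
so the Jacobian matrix of the tuple $(f_0,\ldots,f_N)$ with respect to $(x_0,\ldots,x_N)$ coincides with $\Hess_f$. I would then invoke the classical Jacobian criterion for algebraic (in)dependence, which holds because $\K$ has characteristic zero: the polynomials $f_0,\ldots,f_N\in \K[x_0,\ldots,x_N]$ are algebraically dependent if and only if this $(N+1)\times(N+1)$ Jacobian has rank strictly less than $N+1$, i.e.\ its determinant vanishes identically. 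This is exactly the vanishing of the Hessian determinant of $f$.

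For $(2) \Leftrightarrow (3)$, I would describe $Z$ scheme-theoretically. The rational map $\Phi_f$ is defined by the homogeneous polynomials $f_0,\ldots,f_N$, all of the same degree $d-1$, so $Z\subset(\P^N)^*$ is the irreducible projective variety cut out by the kernel of the graded ring homomorphism
\[
\K[y_0,\ldots,y_N]\longrightarrow \K[x_0,\ldots,x_N],\qquad y_i\longmapsto f_i.
\]
Thus $Z=(\P^N)^*$ precisely when this kernel is zero, which is precisely the statement that $f_0,\ldots,f_N$ satisfy no nontrivial algebraic relation. Equivalently, $Z$ is a proper subvariety if and only if there is a nonzero (and, by homogeneity, one can be chosen homogeneous) polynomial $P$ with $P(f_0,\ldots,f_N)=0$.

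There is no real obstacle here; the content of the proposition is a reorganization of three classical reformulations of the same notion. The only point that deserves care is the invocation of the Jacobian criterion, which genuinely requires characteristic zero (or at least separability), a hypothesis that has already been imposed at the start of the paper. Given that hypothesis, the two equivalences above are essentially immediate.
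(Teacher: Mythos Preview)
Your argument is correct and matches the paper's approach: the paper does not give a separate proof but simply states the proposition as a summary of the preceding discussion, noting that $\Phi_f$ is nondominant iff the Hessian vanishes iff the partial derivatives are algebraically dependent. You have spelled out precisely the two ingredients implicit in that discussion---the Jacobian criterion (using that the Jacobian of $(f_0,\ldots,f_N)$ is $\Hess_f$) and the identification of algebraic dependence with the polar map failing to be dominant---so your write-up is a faithful expansion of what the paper leaves to the reader.
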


The singular locus and the polar image of a hypersurface with vanishing Hessian have a relevant role.  
The following proposition gives a relation between them. Its proof can be 
found in the original work of Perazzo, see \cite{Pe} for the cubic case and \cite[p. 21]{ZakHesse} for any degree. 
 %In the following proposition we recall a classical result used repeatedly in the sequel. 
%Finally, let $\Sing(X)_{red}$ denotes the singular locus of $X$ with scheme reduced structure. 
\begin{prop}[\cite{Pe, ZakHesse}] \label{prop:zdualiny}
  Let $X \subset \P^N$ be a hypersurface with vanishing Hessian. Then $Z^* \subset \Sing(X)$.
  \end{prop}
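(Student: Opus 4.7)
The plan is to translate $r \in Z^*$ into a kernel condition on the Hessian via the reflexivity theorem, and then to invoke the classical Perazzo--Gordan--Noether analysis of the polar map of a vanishing-Hessian hypersurface.

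First, pick a smooth point $r \in Z^*$. By reflexivity, there is a smooth $q \in Z$ such that the hyperplane $H_r \subset (\P^N)^*$ dual to $r$ contains the projective tangent space $T_q Z$. Choose $p \in \P^N$ with $\Phi_f(p) = q$ at which $\Phi_f$ is a submersion onto $Z$. The Jacobian of $\Phi_f$ at $p$ is $\Hess_f(p)$, and Euler's identity $\Hess_f(p)\cdot p = (d-1)q$ shows that $q$ itself lies in the image of $\Hess_f(p)$; hence the affine cone over $T_q Z$ coincides with this image. Using the symmetry of the Hessian, the inclusion $T_q Z \subset H_r$ rewrites as the linear equation
\[
\Hess_f(p)\cdot r = 0,
\]
so $r \in \ker\Hess_f(p)$. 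Geometrically, this says that the projective line $\ell = \langle p, r\rangle$ is tangent at $p$ to the fiber $\Phi_f^{-1}(q)$.

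The main obstacle is to promote this infinitesimal tangency to the global inclusion $\ker\Hess_f(p) \subset \Sing(X)$. This is precisely the content of the Perazzo--Gordan--Noether analysis of vanishing-Hessian hypersurfaces: one shows that the generic fiber of $\Phi_f$ is the projective linear subspace $p + \ker\Hess_f(p)$, and by a bootstrap argument---differentiating the identity $\Hess_f(p')\cdot r = 0$ as $p'$ moves within the fiber, and invoking the symmetry of the higher partial derivatives of $f$---that the restriction of each $f_i$ to $\ell$ is constant in the parameter $t$. Since the leading coefficient of $f_i(p+tr)$, viewed as a polynomial of degree at most $d-1$ in $t$, is $f_i(r)$, this constancy forces $f_i(r) = 0$ for every $i$. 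Combined with Euler's identity, this also gives $f(r) = 0$, and hence $r \in \Sing(X)$ as desired.
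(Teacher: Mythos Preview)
The paper does not supply its own proof of this proposition; it simply refers the reader to Perazzo \cite{Pe} and Zak \cite[p.~21]{ZakHesse}. So there is no in-text argument to match against, only the cited sources.

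Your first paragraph is correct and cleanly done: reflexivity produces a smooth $q\in Z$ with $T_qZ\subset H_r$, and since for a submersive $p\in\Phi_f^{-1}(q)$ the image of $\Hess_f(p)$ is the affine cone over $T_qZ$, symmetry of the Hessian yields $\Hess_f(p)\cdot r=0$.

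The second paragraph is where the real content lies, and your account of it is muddled. What you call a ``bootstrap'' is not one, and no third-order partial derivatives are needed. The honest logical structure is: you are importing the linearity of the fibres of $\Phi_f$ --- this is Theorem~\ref{linearidadepolar} in the present paper, itself quoted from \cite{ZakHesse}. Granting that, the line $\ell=\langle p,r\rangle$ lies in the closure of $\Phi_f^{-1}(q)$, so for generic $p'=p+tr$ one has $\Phi_f(p')=q$, and the \emph{same} reasoning as in your first paragraph (applied at $p'$ rather than $p$) gives $\Hess_f(p')\cdot r=0$. Then $\tfrac{d}{dt}f_i(p+tr)=(\Hess_f(p+tr)\cdot r)_i=0$ identically in $t$, so each $f_i(p+tr)$ is constant and its $t^{d-1}$-coefficient $f_i(r)$ vanishes. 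This is a direct computation once Theorem~\ref{linearidadepolar} is in hand; you should make that dependence explicit and drop the talk of ``differentiating the identity'' and higher-order symmetry, which plays no role. You should also be aware that in Zak's treatment the fibre linearity and the inclusion $Z^*\subset\Sing(X)$ are developed together, so be sure the version of the linearity result you invoke does not already presuppose the proposition you are proving.
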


%  \rosso{esse furthermore n\~ao ta expl\'icito, mas segue do GN}
%
%  \textcolor{blue}{gostaria de ver uma demostra\c c\~ao limpa desse fato}
  
\begin{rmk}\rm Hypersurfaces with vanishing Hessian 
are developable. To see this, we assume that $X$ has vanishing Hessian. First we note that $X^*$ is a proper subvariety of $Z$. In fact if $X^*=Z$ then $Z^*=X$,  but this contradicts Proposition \ref{prop:zdualiny}. 
%$Z$ is a proper subvariety of  $(\P^N)^*$ when $X$ has vanishing Hessian then This implies that  
The strict inclusions of irreducible varieties $X^* \subsetneq Z \subsetneq (\P^N)^*$ imply that $\dim X^* < N-1$, hence 
$X$ is developable.
\end{rmk}

Given projective subvarieties $V,W\subset \mathbb P^N$, we denote by $S(V,W)$ the {\it join} between them. It is the closure of the union of lines in $\mathbb P^N$ joining $V$ to $W$. In particular $S(V)=S(V,V)$ is the secant variety of $V$. \neon{A subvariety $V\subset \mathbb P^{N}$ is a cone if there exists $x\in V$ such that $S(x,V)=V$.}  This motivates the definition of the vertex of $V$
$$
{\rm Vert}(V)=\{x\in V \;:\; S(x,V)=V\}.
$$

%\begin{ex}\rm\label{join}
%Let $l\simeq \mathbb P^{1}\subset \mathbb P^{N}$ be a line  and $C\subset \mathbb P^{N}$ be a curve of any degree. The join $X=S(l,C)$ between $l$ and $C$ is a cone in which its vertex contains $l$. In fact, follows from Terracini Lemma that $T_{x}X$ contains $l$ for general $x\in X$. In particular,  $H\in l^{*}$ for general $H\in X^{*}$ which implies that
%$\displaystyle{X^{*}\subset  l^{*}}$.
%\end{ex}
%\rosso{$X$ \'e  hipersup?  ent\~ao est\'a provando que \'e desenvolvivel? que est\'a provando?}

Cones are the simplest examples of hypersurfaces with vanishing Hessian. Now we state the following useful proposition the proof of which will be left
to the reader.

\begin{prop}{\label{prop:equivalent_definitions_of_cones}}
Let $X = V(f) \subset \P^N$ be a hypersurface. Then the following conditions are equivalent:
\begin{enumerate}
  \item[(i)] $X$ is a cone;
  \item[(ii)] The partial derivatives of $f$ are linearly dependent;
  \item[(iii)]  $Z$ is contained in a hyperplane of $(\P^N)^*$;
  \item[(iv)] $X^{*}$  is contained in a hyperplane of $(\P^N)^*$;
  \item[(v)]  Up to a projective transformation $f$ depends on at most $N$ variables.
\end{enumerate}
\end{prop}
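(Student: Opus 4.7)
The plan is to establish the equivalences by closing the loop
$(\mathrm{i})\Leftrightarrow(\mathrm{v})\Leftrightarrow(\mathrm{ii})\Leftrightarrow(\mathrm{iii})\Rightarrow(\mathrm{iv})\Rightarrow(\mathrm{ii})$,
with (ii) serving as the algebraic hub that connects the geometric statements about $X$, $Z$, and $X^{*}$.

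First I would handle $(\mathrm{i})\Leftrightarrow(\mathrm{v})$ by a coordinate normalization. After a projective change sending a vertex $p$ of $X$ to $[1{:}0{:}\cdots{:}0]$, the defining relation $S(p,X)=X$ means that $X$ is the projective cone over $X\cap\{x_{0}=0\}$ from $p$. Since the cone over a hypersurface $V(g)\subset\P^{N-1}=\{x_0=0\}$ is cut out in $\P^{N}$ by the very same equation $g$, we get $f=g(x_{1},\ldots,x_{N})$ up to scalar; the converse direction is trivial. For $(\mathrm{v})\Leftrightarrow(\mathrm{ii})$ I would use that a linear change $y=Ax$ transforms $(\partial f/\partial x_{0},\ldots,\partial f/\partial x_{N})$ linearly into $(\partial f/\partial y_{0},\ldots,\partial f/\partial y_{N})$. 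So any nontrivial linear relation $\sum c_{i}\,\partial f/\partial x_{i}\equiv 0$ can be transported, by choosing $A$ with first row $(c_{0},\ldots,c_{N})$, to $\partial f/\partial y_{0}\equiv 0$, which says $f$ is independent of $y_{0}$; and conversely.

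For $(\mathrm{ii})\Leftrightarrow(\mathrm{iii})$ I would simply observe that the polar map has coordinates $[f_{0}:\cdots:f_{N}]$, so $Z$ is contained in the hyperplane $\{\sum c_{i}y_{i}=0\}$ precisely when the polynomial $\sum c_{i}f_{i}$ vanishes identically on $\P^{N}$, that is, vanishes as a polynomial. The implication $(\mathrm{iii})\Rightarrow(\mathrm{iv})$ is immediate from the inclusion $X^{*}\subset Z$, which was noted earlier in the section (the Gauss map is the restriction of the polar map to $X$).

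The only step that requires a genuine argument is $(\mathrm{iv})\Rightarrow(\mathrm{ii})$, and this is where I would expect the single nontrivial observation of the proof to lie. Assume $X^{*}\subset\{\sum c_{i}y_{i}=0\}$; then for every smooth point $x\in X$ one has $\sum c_{i}f_{i}(x)=0$, so the polynomial $g=\sum c_{i}f_{i}$ vanishes on the smooth locus of $X$ and therefore on all of $X$. Since $\deg g=d-1<d=\deg f$ and $X=V(f)$ is an irreducible hypersurface, $f\nmid g$, forcing $g\equiv 0$. This gives $(\mathrm{ii})$ and closes the chain. The whole argument is elementary once one sets things up in the right order; the main (and only) obstacle is arranging the implications so that the degree comparison in $(\mathrm{iv})\Rightarrow(\mathrm{ii})$ is available, which is why I would route through $(\mathrm{ii})$ rather than trying to show $(\mathrm{iv})\Rightarrow(\mathrm{iii})$ directly.
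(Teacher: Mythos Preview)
The paper explicitly leaves this proposition to the reader, so there is no proof in the text to compare against; your chain of implications is the standard one and is essentially correct.

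The only place that needs tightening is $(\mathrm{iv})\Rightarrow(\mathrm{ii})$. You write ``since $X=V(f)$ is an irreducible hypersurface, $f\nmid g$, forcing $g\equiv 0$'', but the proposition does not assume irreducibility. What the argument actually requires is that $f$ be \emph{reduced}: then $(f)$ is a radical ideal, so $I(V(f))=(f)$, and $g=\sum c_if_i$ vanishing on $V(f)$ yields $f\mid g$; the degree inequality $\deg g=d-1<d$ then forces $g=0$ exactly as you say. For non-reduced $f$ (say $f=p^2$ with $p$ defining a smooth hypersurface) every point of $V(f)$ is singular, so $X^*$ is empty and condition (iv) becomes vacuous while (ii) can fail; hence the statement should be read with $f$ reduced, and with that standing hypothesis your argument goes through unchanged once ``irreducible'' is replaced by ``reduced''. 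The same caveat applies, more mildly, to your sketch of $(\mathrm{i})\Rightarrow(\mathrm{v})$: the claim that the cone over $V(g)$ is cut out by $g$ identifies $V(f)$ and $V(g)$ as \emph{sets}, and passing to $f=g$ up to scalar again uses that $f$ is squarefree.
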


There are many classical examples of varieties with vanishing Hessian   which are not cones.  The following example  appears in the work of Gordan and Noether \cite{GN} and Perazzo \cite{Pe}, called 
{\it un esempio semplicissimo}. 
\begin{ex} \label{simples}\rm
Let $X = V(f) \subset \P^4$ be the irreducible hypersurface given by 
$$f = x_0x_3^2+x_1x_3x_4+x_2x_4^2.$$
We can check that $X$ is not a cone, showing for example the linear independence between the partial derivatives.  But since $f_0f_2=f_1^2$ is an algebraic relation among them, $X$ has vanishing Hessian. 
\end{ex}

\subsection{Linearity of general fibers and focal locus}

{\neon{In this section we recall some useful results about developable varieties. The main basic fact is that the general  fiber of the Gauss map  is a union of finitely many linear spaces, see \cite[p. 95]{C.Segre2}. Actually,  it has been proved by Zak that the closure of a general fiber is irreducible. For instance, see  \cite[Theorem 2.3]{Zak1} or \cite[p. 87]{FP}.}} 

\begin{thm}[\cite{C.Segre2, Zak1}]\label{linearidade}
Let $X\subset \mathbb P^{N}$ be  an irreducible projective variety. If $X$ is   developable then the closure of a general fiber of $\gamma$ is a 
 linear subspace. 
\end{thm}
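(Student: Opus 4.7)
My plan is to split the argument into a local/infinitesimal step and a global irreducibility step. The local step, going back to C.~Segre, exhibits each irreducible component of a general fiber as an open piece of a linear subspace; the global step invokes Zak's irreducibility, already cited in the statement, to collapse this to a single linear subspace.

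Fix a general smooth point $x\in X$ and let $F$ be the closure of the connected component of $\gamma^{-1}(\gamma(x))$ through $x$. Set $T:=T_xX\subset \P^N$, the embedded projective tangent space. The first step is the containment $F\subset T$: at any smooth point $y\in F$ one has $T_yX=T_xX=T$, so in particular $y\in T_yX=T$. Thus the entire fiber is already cut out inside a fixed linear subspace of $\P^N$.

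Next I would identify the Zariski tangent space $T_xF$. Under the canonical isomorphism $T_{T_xX}\mathbb{G}(n,N)\cong \Hom(T_xX,N_xX)$, the differential $d\gamma_x$ is given by the second fundamental form $II_x$ of $X$ at $x$, so
$$T_xF\;=\;\ker d\gamma_x\;=\;\{v\in T_xX: II_x(v,\cdot)=0\}.$$
This is a linear subspace of $T_xX=T$; let $\Lambda$ denote the associated projective linear subspace of $T$ through $x$, of dimension $d=n-\dim X^{\vee}>0$ by developability.

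The heart of the proof, and the main obstacle, is to show that $F$ and $\Lambda$ agree in a Zariski neighborhood of $x$. The guiding principle is that along any analytic arc $c(t)$ in $F$ the Gauss value $\gamma(c(t))=T$ is constant; combining this constancy with the Gauss equations and the condition $II_x(c'(0),\cdot)=0$ forces all higher jets of $c$ at $0$ to lie in the affine line $x+\langle c'(0)\rangle$. Phrased differently, the distribution $\ker II$ on the smooth locus is Frobenius-integrable with integral leaves that are open subsets of projective linear subspaces of $\P^N$. This step requires the most care, since one must rule out any higher-order bending of $F$ away from $\Lambda$; once it is in hand, it yields that $F$ contains a non-empty open subset of $\Lambda$.

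Finally, the irreducibility theorem of Zak implies that $F$ is irreducible. Since $\dim F=\dim \Lambda$ and $F$ contains a dense open subset of the irreducible variety $\Lambda$, we conclude $F=\Lambda$, a linear subspace, as desired.
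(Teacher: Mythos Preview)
The paper does not supply its own proof here: the theorem is stated with references to Segre (for the fact that a general fiber is a finite union of linear spaces) and to Zak (for irreducibility of the closure of a general fiber). Your two-step plan---local linearity in the spirit of Segre, then Zak's irreducibility to reduce to a single component---is exactly the structure the paper invokes, so at the level of strategy you and the paper agree.

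Where your sketch breaks down is the proposed mechanism for the Segre step. The claim that for an arbitrary arc $c(t)$ in $F$ ``all higher jets of $c$ at $0$ lie in the affine line $x+\langle c'(0)\rangle$'' is false: once $F$ is a linear space of dimension at least $2$, an arc in $F$ can be a plane conic, and then $c''(0)\notin\langle c'(0)\rangle$. What you actually need is that the jets of $c$ lie in the direction of $\Lambda$, and the single hypothesis $II_x(c'(0),\cdot)=0$ at the basepoint does not give this; one must use the constancy of $\gamma$ along all of $F$ to show that the subspaces $T_yF=\ker II_y\subset T$ are independent of $y\in F$, i.e.\ that the Gauss map of $F$ inside the fixed $T$ is itself constant. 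That is the real content of Segre's argument. Note also that the phrase ``$\ker II$ is Frobenius-integrable'' is empty in this context, since the fibers of $\gamma$ are automatically integral manifolds of $\ker d\gamma$; integrability by itself does not yield linearity of the leaves. You rightly flag this step as ``the main obstacle,'' but the argument you offer for it is incorrect as written.
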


\neon{Let $X\subset \mathbb P^{N}$ be an irreducible projective variety of dimension $n\ge 1$.} When $X$ is a hypersurface with vanishing Hessian, the fibers of its  polar map share this phenomenon of linearity. We state this result below, the proof can be found in \cite[Proposition 4.9]{ZakHesse}.  

\begin{thm}[\cite{ZakHesse}]\label{linearidadepolar}
Let $X\subset \mathbb P^{N}$ be a reduced hypersurface with vanishing Hessian. The closure of the fiber of $\Phi_{f}$ over a general point $z\in Z$ is a union of finitely many linear subspaces passing through the subspace $(T_{z}Z)^{*}$.  
\end{thm}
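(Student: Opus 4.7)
The plan is to analyze the fiber of $\Phi_f$ over a general $z\in Z$ infinitesimally and, using the cone structure and an integrability argument, to conclude that each component is a linear subspace containing the focal subspace $(T_z Z)^{*}$.

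Lifting to the affine cone $\mathbb{A}^{N+1}$, at a smooth point $p$ of $\Phi_f$ above a general $z\in Z$, the Jacobian of the lifted polar map $\nabla f$ is the Hessian matrix $\Hess_f(p)$, whose image is the tangent space $T_z CZ$ to the affine cone $CZ$ over $Z$. Since $\Hess_f(p)$ is symmetric, its kernel is the orthogonal complement of its image with respect to the standard pairing, so $\ker \Hess_f(p)=(T_z CZ)^{\perp}$; projectivizing, this orthogonal is precisely $(T_z Z)^{*}\subset\P^N$. The key observation is that this orthogonal depends only on $z$ and not on $p$: if $p'$ is any other smooth fiber point, then $\operatorname{Im}\Hess_f(p')=T_z CZ$ as well, so $\ker \Hess_f(p')=(T_z CZ)^{\perp}$ is the same subspace $V\subset\mathbb{A}^{N+1}$.

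By Euler's identity $\Hess_f(p)\cdot p=(d-1)\nabla f(p)=(d-1)z\neq 0$, so $p\notin V$, and the lifted tangent space to the fiber at $p$ is $V+\mathbb{K} p$, of dimension $N-r+1$ where $r=\dim Z$. The central step is to show that the component of the lifted fiber $\widetilde F$ through $p$ equals the linear subspace $V+\mathbb{K} p$. Since the constant distribution $V$ is tangent to $\widetilde F$ at every smooth point, the defining ideal of $\widetilde F$ is invariant under the derivations $\partial_v$ for $v\in V$ (differentiating a local equation in a $V$-direction yields a function vanishing on the smooth locus, hence on all of $\widetilde F$); thus $\widetilde F$ is translation-invariant by $V$. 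Moreover $\widetilde F$ is a cone, because $\Phi_f$ is homogeneous and $\Phi_f(\lambda p)=\lambda^{d-1}\Phi_f(p)$ stays on the line $\mathbb{K} z$. Combining translation-invariance by $V$ and cone invariance, and matching dimensions $\dim \widetilde F=N-r+1=\dim(\mathbb{K} p+V)$, the component through $p$ equals $\mathbb{K} p+V$.

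Projectivizing, each irreducible component of $\Phi_f^{-1}(z)$ meeting the smooth locus of $\Phi_f$ is the linear $(N-r)$-subspace $\langle p,(T_z Z)^{*}\rangle$. For general $z$ every component meets the smooth locus, and the (closed) fiber has only finitely many irreducible components, yielding the desired finite union. The main obstacle is the combination of constancy of $V$ with the translation and cone invariance, which relies on the fact, specific to vanishing-Hessian hypersurfaces, that $\ker \Hess_f$ depends only on the image point $z$ and not on the choice of fiber point $p$.
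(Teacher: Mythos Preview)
The paper does not supply its own proof of this theorem; it merely records the statement and refers the reader to \cite[Proposition~4.9]{ZakHesse}. So there is no in-paper argument to compare against.

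Your argument is correct and is essentially the classical one going back to Segre and formalized by Zak. The crucial point---that the symmetry of $\Hess_f$ forces $\ker\Hess_f(p)=(\operatorname{Im}\Hess_f(p))^{\perp}=(T_zCZ)^{\perp}$, hence depends only on $z$---is exactly what makes the fiber linear rather than merely ruled. Two small remarks on presentation: first, when you say ``$V$ is tangent to $\widetilde F$ at every smooth point'' you really only need (and only know) tangency along the open fiber $\Phi_f^{-1}(z)$, which for general $z$ is smooth by generic smoothness in characteristic zero; since this open fiber is dense in its closure, your ideal-derivation argument goes through unchanged. Second, the passage from ``$\widetilde F$ is $V$-translation invariant and a cone'' to ``each irreducible component equals $\mathbb{K}p+V$'' uses that translation by the connected group $V$ must preserve each component, which is worth saying explicitly. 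With those clarifications the proof is complete.
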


\neon{In the next example we illustrate the linearity of fibers of $\gamma$ and $\Phi_{f}$. Before moving on, we  recall the definition of  a scroll surface $S(1,2)\subset \mathbb P^4$. Let  $l\subset \mathbb P^4$ be a line and let  $C\subset \mathbb P^4$ be a conic lying on a plane complementary to $l$.  Given an isomorphism $\varphi: l\longrightarrow C$ we let $S(1,2)$ be the union of the lines $\overline{x,\varphi(x)}$ joining points of $l$ to corresponding points of $C$. We note that all the scrolls $S(1,2)$ are projectively equivalent,  see for example \cite[Example 8.17]{Ha}.}

\begin{ex}\rm\label{novamente}
We want to describe the fibers of $\gamma$ and $\Phi_{f}$ where $f = x_0x_3^2+x_1x_3x_4+x_2x_4^2$. In particular, we will see in this example that $X^{*}\subset (\mathbb P^4)^*$ is a scroll surface $S(1,2)$.  See Figure \ref{scroll}.

The singular locus of $X=V(f)$, with reduced structure, is $Y=V(x_{3},x_{4})$. Its polar image is the quadratic cone $Z=V(y_{0}y_{2}-y_{1}^{2})\subset (\mathbb P^{4})^{*}$ which has as vertex the line $l=Y^{*}$. Therefore $Z^{*}$ is a conic contained in $Y$. 
Consider the plane $P=V(y_{3},y_{4})\subset (\mathbb P^{4})^{*}$, observe that $C=Z\cap P$ is a conic and  $Z$ is the join between $C$ and $l$.

We denote by $\mathbb{P}_{t}^{3}$, $t\in \mathbb{P}^1$ the family of hyperplanes containing $Y$ and 
for each $t\in \mathbb{P}^1$ 
let  $\eta_t\in (\mathbb{P}^4)^*$ be the corresponding point of $l$. The reader can check that  $\mathbb P_{t}^{3}\cap X$ is a union of a plane $\mathbb P_{t}^{2}$ and $Y$, where $Y$  appears with multiplicity two.  A direct calculation shows that for a general point $x\in \mathbb P_{t}^{3}$ the closure of $\Phi_{f}^{-1}(y)$, $y=\Phi_{f}(x)$, is a line contained in $\mathbb P_{t}^{3}$ and passing through $\xi_{t}=(T_{y}Z)^{*}\in Z^{*}$. In particular, for a general $x\in \mathbb P_{t}^{2}$  the closure of $\gamma^{-1}(y)$ is a line contained in $\mathbb P_{t}^{2}$ passing through $\xi_{t}$. Hence $X$ is swept out by planes $\mathbb P_{t}^{2}$, and  fibers of $\gamma$ lying  in $\mathbb P_{t}^{2}$  determine a star of lines passing through the point $\xi_{t}\in Z^{*}$.

Now we prove that $X^{*}$ is a scroll surface $S(1,2)$. For a general point $x\in \mathbb P_{t}^{2}$, the tangent space $T_{x}X$ contains $\mathbb P_{t}^{2}$. Therefore the image of $\mathbb P_{t}^{2}$ by $\gamma$ is $(\mathbb P_{t}^{2})^{*}\cong l_t'$. 
Let $\mu_t\in (\mathbb{P}^4)^*$ be the point corresponding to the unique hyperplane $H_t$ containing $\mathbb P_{t}^{2}$ and $P^{*}$ (as $P^*\subset H_t$, $\mu_t\in Z\cap P=C$). Observe that  $l_t'$ is the line passing through $\eta_t\in l$ and through $\mu_t\in C$ . This shows that $X^{*}$ is a scroll $S(1,2)$  which has as rulings the lines passing through $\eta_{t}\in Y^{*}=l$ and $\mu_{t}\in C$, $t\in \mathbb P^{1}$.
\end{ex}

\begin{center}
\begin{figure}[h]
\centering
\includegraphics[height=1.5in]{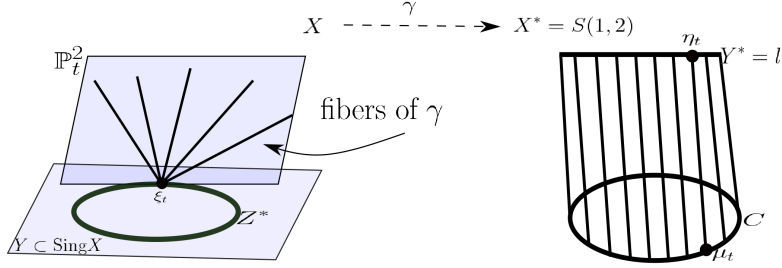}
\caption{Cubic hypersuface with vanishing Hessian.}
\label{scroll}
\end{figure}
\end{center}

\bigskip

\label{focus} Let $X\subset \mathbb P^{N}$ be a developable projective variety. By Theorem \ref{linearidade}, $X$ is ruled by linear subspaces (fibers of the Gauss map) of dimension $k$, where  
$$k={\rm dim}(X) - {\rm dim}(X^{\lor}).$$  
Let $U\subset X$ be the open subset where $\gamma$ has maximal rank. For each $x\in U$, let $L_{x}$ be the $k$-dimensional subspace passing through $x$ such that $\gamma$ is constant along $L_{x}$. 

We will denote by $B_{\gamma}$ the closure in $\mathbb G(k,N)$ of the set  $\{L_{x}\;:\;x\in U\}$. We shall say that $B_{\gamma}$ is the family of $k$--dimensional subspaces determined by fibers of $\gamma$.  Let $B'_{\gamma}$ be a  desingularization of $B_{\gamma}$ and $\mathcal I\subset B_{\gamma}'\times \mathbb P^{N}$ the incidence variety of $B_{\gamma}'$ with natural projection
$
\psi:\mathcal I\longrightarrow X.
$ 
For a general $x\in X$ the fiber $\psi^{-1}(x)$ coincides with the point $(L_{x}, x)\in \mathcal I$. 

Let $R_{\psi}$ be the ramification divisor of $\psi$ and $\pi:\mathcal I \longrightarrow B_{\gamma}'$ the natural projection on the first coordinate. We can write $R_{\psi}=H_{\psi}+V_{\psi}$ where the restriction of $\pi$ to any irreducible component of the support of $H_{\psi}$ is dominant and of the support of $V_{\psi}$ is nondominant. We say that $H_{\psi}$ is the {\it horizontal divisor} and $V_{\psi}$ is the {\it vertical divisor}. The direct image  by $\psi$ of the horizontal divisor,  denoted by $\Delta=\psi_{*}(H_{\psi})$, is called the {\it focal locus} of $X$.

We note that the restriction of $\psi$ to a general fiber of $\pi$ 
$$
\psi|_{\pi^{-1}(L)}:\pi^{-1}(L)\longrightarrow L
$$ 
is an isomorphism.  So the restriction of $H_{\psi}$ to $\pi^{-1}(L)$ defines a divisor in $L$  which coincides with the restriction of the focal locus of $X$ to $L$. This divisor will be denoted by $\Delta_{L}$.  

One of the main results concerning developable varieties is the following. For the proof see \cite[Theorem 3.4.2]{IL}.

\begin{thm}\label{focal locus}
Let $X\subset\mathbb P^{N}$ be a developable projective variety. If $X$ is not a linear subspace, then $X$ is singular and its focal locus is contained in ${\rm Sing}(X)$. Moreover, for a general $L$ belonging to $B_{\gamma}$, the restriction of the focal locus to $L$ is a divisor $\Delta_{L}$ in $L$ of degree ${\rm dim}(B_{\gamma})$.
\end{thm}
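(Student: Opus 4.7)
The plan is to derive both the singularity and the degree statements from a local computation of the differential $d\psi$, combined with Zariski's Main Theorem for the birational morphism $\psi$. I will work in an affine chart around a general element $L_0 \in B_\gamma$: choose local coordinates $(t_1, \ldots, t_m)$ on $B'_\gamma$ near $L_0$ (where $m = \dim B_\gamma = n - k$) together with linear coordinates $(s_1, \ldots, s_k)$ on the fibers of $\pi$, so that
$$\psi(t, s) = p(t) + \sum_i s_i v_i(t),$$
with $p(t) \in L_t$ a basepoint and $v_1(t), \ldots, v_k(t)$ spanning the direction of $L_t$. The crucial input from Theorem \ref{linearidade} is that the projective tangent plane $T(t)$ to $X$ at every smooth point of $L_t$ depends only on $t$. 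Setting $V = \operatorname{span}(v_1(t_0), \ldots, v_k(t_0)) \subset T := T(t_0)$ and choosing a linear complement of $V$ in $T$, the condition that $T$ is independent of $s$ forces $(\partial v_i / \partial t_j)(t_0) \in T$. A direct computation then shows that the Jacobian of $\psi$ at $(t_0, s)$ has the block form
$$J(s) = \begin{pmatrix} I_k & * \\ 0 & B(s) \end{pmatrix},$$
where $B(s)$ is an $m \times m$ matrix whose entries are affine-linear in $s$.

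For the degree statement, I will reinterpret $B(s)$ as a morphism of normal bundles on $L_0$. On $\pi^{-1}(L_0) \cong L_0$, the exact sequence of tangent sheaves identifies the normal bundle of $\pi^{-1}(L_0)$ in $\mathcal I$ with the trivial bundle $W \otimes \mathcal O_{L_0}$, where $W = T_{t_0} B'_\gamma$. On the other side, since $L_0$ is a linear subspace of the $n$-plane $T$, and $T$ coincides with the projective tangent space to $X$ at every smooth $p \in L_0$, the normal bundle sequence for $L_0 \subset T \subset \mathbb P^N$ gives $N_{L_0/X} \cong \mathcal O_{L_0}(1)^m$ for general $L_0$. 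Thus $d\psi$ induces a bundle morphism $\mathcal O_{L_0}^m \to \mathcal O_{L_0}(1)^m$, and $\Delta_{L_0}$ is exactly its degeneracy divisor. Since $\psi$ is generically \'etale, this morphism is generically an isomorphism, so its determinant is a nonzero section of $\mathcal O_{L_0}(m)$, and $\Delta_{L_0}$ has degree $m = \dim B_\gamma$.

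For the inclusion $\Delta \subset \Sing(X)$, I will combine the irreducibility of a general fiber of $\gamma$ (Theorem \ref{linearidade}) with Zariski's Main Theorem. Irreducibility of the general fiber means $\psi : \mathcal I \to X$ has degree one, hence is birational. Since $\mathcal I$ is smooth and $X \setminus \Sing(X)$ is normal, Zariski's Main Theorem yields that $\psi$ restricts to an isomorphism $\psi^{-1}(X \setminus \Sing(X)) \to X \setminus \Sing(X)$. Therefore $R_\psi \subset \psi^{-1}(\Sing(X))$, and taking the direct image gives $\Delta = \psi_*(H_\psi) \subset \Sing(X)$. Since $X$ is not a linear subspace, $\dim B_\gamma \ge 1$, hence $\Delta_{L_0}$ has positive degree, $\Delta$ is non-empty, and $X$ must be singular.

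The main obstacle I foresee is the identification $N_{L_0/X} \cong \mathcal O_{L_0}(1)^m$ for general $L_0$. One must verify that the normal sheaf is locally free on $L_0$---that is, that $L_0 \cap \Sing(X)$ has codimension at least two in $L_0$---and rule out the a priori possibility of splittings with smaller twists, which would force $\deg \Delta_{L_0} < m$. This will ultimately rely on the inclusion $\Delta \subset \Sing(X)$ established above, together with the generic maximality of the rank of the differential $W \to \operatorname{Hom}(V, T/V)$ induced by the Gauss map.
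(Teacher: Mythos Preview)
The paper does not actually prove this statement: it is quoted as a known result, with the proof deferred to \cite[Theorem 3.4.2]{IL}. There is therefore no in-paper argument to compare your attempt against.

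Your strategy is essentially the classical one found in that reference: parametrize a neighbourhood of a general ruling by $(t,s)$, observe that constancy of the projective tangent plane along $L_t$ forces the Jacobian of $\psi$ into the block-triangular shape you describe, and read off the focal divisor as the vanishing of $\det B(s)$. The appeal to Zariski's Main Theorem for the inclusion $R_\psi \subset \psi^{-1}(\Sing X)$ is clean and correct, since $\psi$ is proper and birational with smooth source.

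Regarding the obstacle you flag: the difficulty with $N_{L_0/X}$ being locally free is real but self-inflicted. Indeed, you have just argued that $\Delta_{L_0} \subset L_0 \cap \Sing(X)$ is a divisor in $L_0$, so the codimension-two hypothesis you want is \emph{false}, and $N_{L_0/X}$ is genuinely not a bundle on all of $L_0$. The fix is to work not with $N_{L_0/X}$ but with $N_{L_0/T}$, where $T = T(t_0)$ is the constant projective tangent $n$-plane along $L_0$. Since $L_0 \cong \P^k$ sits linearly in $T \cong \P^n$, one has $N_{L_0/T} \cong \OX_{L_0}(1)^m$ on the nose, everywhere on $L_0$, with no smoothness assumption on $X$ and no possibility of smaller twists. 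The constancy of the tangent plane guarantees that the image of $d\psi$ lands in $T$, so the induced normal map is globally $\OX_{L_0}^m \to N_{L_0/T} = \OX_{L_0}(1)^m$; its determinant is a section of $\OX_{L_0}(m)$, nonzero because $\psi$ is a local isomorphism at a general point of $L_0$. This yields $\deg \Delta_{L_0} = m$ directly and dissolves the circularity you were worried about.
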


\subsection{Cubics with vanishing Hessian}\label{vanishing}
Revisiting the work of Perazzo \cite{Pe}, in \cite{GRu} the authors provide a classification of cubic hypersurfaces with vanishing Hessian in $\mathbb P^{N}$, for $N\le 6$.  In this section we rebuild the classification for $N=4$. This digression will be useful in the next section. 

%We classify cubics with v.H.d. The first result is that a cubic hypersurface in $\P^4$, not a cone with v.H. is non- normal.

%Let $X = V(f) \subset \P^4$ be an irreducible cubic hypersurface with vanishing hessian, not a cone. 
%In this case the polar image $Z \subset \P^4$ is not a cone ({\bf why??}) and has dimension at most 3 by  Proposition \ref{prop.hess_vs_cone}. 
%Since $Z^* \subset Y$ and $\dim Z^* \geq 1$, ($Z^*$ is irreducible, if $\dim Z^*=0$, $Z^*$ must be a point, then $Z$ will be the set of hyperplanes containig the point, i.e. $Z$ will be an hyperplane, and $X$ a cone) the singular locus of $X$, $Y$ has dimension $2= \dim X - 1$, that is, $X$ is a
%non-normal hypersurface. {\bf In this case we have the following  useful result}.

\begin{lema} \label{lemma:maximal_singular_locus}
 Let $X \subset \P^N$, $N\ge 3$,  be an irreducible cubic hypersurface. Assume there is a component $Y$ of ${\rm Sing}(X)$, with $\dim Y = \dim X -1$. Then $Y$ is a linear subspace.
\end{lema}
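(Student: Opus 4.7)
The plan is to reduce the statement to the classical fact that an irreducible plane cubic has at most one singular point. First I would choose a general $2$-plane $\Pi \subset \P^{N}$. Since $X$ is irreducible of dimension $N-1 \geq 2$ (this is exactly where the hypothesis $N \geq 3$ enters), iterated application of Bertini's theorem to $X$ cut by $N-2$ sufficiently general hyperplanes yields that $C := X \cap \Pi$ is an irreducible plane cubic curve.

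Second, because $Y$ is reduced of pure codimension two in $\P^{N}$, for generic $\Pi$ the intersection $Y \cap \Pi$ consists of exactly $d = \deg Y$ distinct reduced points. Every such point $p \in Y \cap \Pi$ lies in $\Sing(X)$, and a local computation shows that the defining equation of $X$ at $p$ has vanishing linear part, so its restriction to $\Pi$ also has no linear part, whence $p$ is a singular point of $C$.

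Finally, an irreducible plane cubic has at most one singular point: if it had two, the line joining them would meet $C$ with multiplicity at least four, and by B\'ezout's theorem it would have to be a component of $C$, contradicting irreducibility. Thus $d \leq 1$, so $\deg Y = 1$ and $Y$ is a linear subspace of $\P^{N}$. The most delicate step is choosing $\Pi$ general enough to realize both requirements at the same time (irreducibility of $C$ via Bertini and transversality of $Y \cap \Pi$ of cardinality exactly $\deg Y$); both are standard open conditions on the Grassmannian of $2$-planes, so a sufficiently general $\Pi$ works and the remainder is essentially an accounting of singularities on a plane cubic.
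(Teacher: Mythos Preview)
Your proof is correct and gives a genuinely different argument from the paper's. The paper proceeds via the secant variety: since $X$ is a cubic, every secant line of $Y$ meets $X$ with multiplicity at least $4$ and must therefore lie in $X$, so $S(Y)\subset X$; then either $S(Y)=Y$, in which case $Y$ is linear, or $S(Y)=X$, which is ruled out because $\dim S(Y)=\dim Y+1$ forces $S(Y)$ (hence $X$) to be a linear space, contradicting $\deg X=3$. That is essentially a two-line conceptual argument resting on a standard dimension fact for secant varieties. Your route via Bertini and the classical bound on singular points of an irreducible plane cubic is more elementary in that it avoids secant varieties entirely, at the price of handling the generic-position conditions on $\Pi$. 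Both arguments ultimately exploit the same B\'ezout obstruction---a line through two singular points of a cubic must lie on it---but the paper applies it globally to the whole of $Y$, while you detect it after slicing down to a plane curve.
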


\begin{proof}
 Since $X$ has degree 3, the secant variety $S(Y)$ of $Y$ must be contained in $X$. 
Hence either $S(Y)= Y$, in this case $Y$ is a linear subspace, or $S(Y)=X$. But the 
second case cannot occur, because  the equality $$\dim S(Y) = \dim Y + 1$$  implies that $X=S(Y)$ is a linear subspace, see \cite[Proposition 1.2.2]{Ru}.
\end{proof}

\begin{rmk}\rm
\neon{Here, we note that an irreducible developable cubic surface $X\subset \mathbb P^N$ is a cone.  It is well known that  a developable surface $X\subset \mathbb P^N$ must be either a cone or the tangent developable to a curve, see \cite[Theorem 3.4.6]{IL}. In the last case the curve lies in the singular locus of $X$. Consequently, it follows from Lemma \ref{lemma:maximal_singular_locus} that any developable  cubic surface in $\mathbb P^{3}$ is a cone. Now, let $N\ge 3$. By taking the smaller linear subspace containing $X$, we can assume that $X\subset \mathbb P^N$ is non-degenerate, and let $H\simeq \mathbb P^{N-1}$ be a general hyperplane. We will show that $N=3$. Since $C = X\cap H$ is an irreducible non-degenerate curve in $H$ of degree $3$, we conclude that $N\le 4$, see \cite[Proposition 18.9]{Ha}. If $N=4$, then  $C$ is the rational normal curve in $H$ and  $X$ is smooth, but since developable (non-linear) varieties are singular, see [Theorem \ref{focal locus}],  this leads us to a contradiction.} 
\end{rmk}

\begin{lema}\label{idealquadratico}
Let $X \subset \P^4$ be an irreducible cubic hypersurface. Assume that $X$ is not a cone. If ${\rm Sing}(X)$ contains a linearly embedded $\mathbb P^{2}$ then $X$ is projectively equivalent to $V(f)$, where  $f=x_0x_3^2 + x_1x_3x_4 + x_2x_4^2$.
\end{lema}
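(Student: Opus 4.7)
The plan is to place the embedded $\mathbb P^2\subset \Sing(X)$ in standard position, read off a normal form for $f$ from the singularity condition, and then use the non-cone hypothesis to remove the remaining freedom. After a projective change of coordinates I assume that the plane is $\mathbb P^2=V(x_3,x_4)$. Since every point of $V(x_3,x_4)$ is singular on $X$, both $f$ and every partial derivative $f_i$ must vanish along $V(x_3,x_4)$, so they lie in the prime ideal $(x_3,x_4)$. Writing $f=x_3 P+x_4 Q$ and then imposing $f_3,f_4\in (x_3,x_4)$ forces $P,Q\in (x_3,x_4)$ as well, so in fact $f\in (x_3,x_4)^2$.

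Being cubic, $f$ then admits a normal form
$$f=x_3^2 L_1 + x_3 x_4 L_2 + x_4^2 L_3$$
with $L_1,L_2,L_3\in \K[x_0,\dots,x_4]_1$. I would split each $L_i=\ell_i+m_i$ with $\ell_i\in \K[x_0,x_1,x_2]_1$ and $m_i\in \K[x_3,x_4]_1$. The main use of the non-cone hypothesis is to show that $\ell_1,\ell_2,\ell_3$ are linearly independent. If instead $\dim\langle \ell_1,\ell_2,\ell_3\rangle \le 2$, then after a $GL_3$-change on $(x_0,x_1,x_2)$ one can arrange $\ell_i\in \langle x_0,x_1\rangle$ for all $i$, so every $L_i$ lies in $\langle x_0,x_1,x_3,x_4\rangle$ and $f$ is free of the variable $x_2$. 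By Proposition \ref{prop:equivalent_definitions_of_cones}(v) this forces $X$ to be a cone, contradicting the hypothesis.

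With $\ell_1,\ell_2,\ell_3$ linearly independent, a further $GL_3$-change on $(x_0,x_1,x_2)$ normalizes them to $\ell_i=x_{i-1}$, so that $L_i=x_{i-1}+m_i(x_3,x_4)$. The upper-triangular substitution $x_{i-1}\mapsto x_{i-1}-m_i(x_3,x_4)$ for $i=1,2,3$ fixes $x_3,x_4$ (hence also the distinguished plane) and replaces each $L_i$ by $x_{i-1}$. After these substitutions $f$ becomes $x_0x_3^2+x_1x_3x_4+x_2x_4^2$, as required. The first and third steps are essentially bookkeeping; the conceptually decisive step is the middle one, where the non-cone hypothesis is converted into linear independence of the $\ell_i$ by exhibiting a variable that otherwise fails to appear in $f$.
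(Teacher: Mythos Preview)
Your proof is correct and follows essentially the same approach as the paper's: place the plane at $V(x_3,x_4)$, use the singularity condition to get $f\in(x_3,x_4)^2$, write $f$ as a combination of $x_3^2,x_3x_4,x_4^2$ with linear coefficients, and invoke the non-cone hypothesis to normalize those coefficients. Your version is in fact slightly more careful than the paper's, which asserts only that the full linear forms $l_0,l_1,l_2$ are independent and then jumps to the normal form; you correctly isolate the $\langle x_0,x_1,x_2\rangle$-components $\ell_i$, show \emph{those} are independent (which is what is actually needed and what actually follows from the non-cone hypothesis), and then remove the residual $m_i\in\langle x_3,x_4\rangle$ terms by an explicit upper-triangular substitution.
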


\begin{proof}
Let us suppose $W=V(x_{3},x_{4})\subset  {\rm Sing}(X)$. If $f$ is an irreducible polynomial defining $X$ one can write $f=ax_{3}+bx_{4}$, where $a$ and $b$ are polynomials of degree two. Since the derivatives of $f$ must vanish in $W$ we can write $$f=l_{0}x_{3}^{2}+l_{1}x_{3}x_{4}+l_{2}x_{4}^{2}$$ where $l_{i}$, $i=0,1,2$,  are linear forms. If $X$ is not a cone, then $l_{0}$, $l_{1}$ and $l_{2}$ are linearly independent, so there is a projective transformation such that $$f=x_0x_3^2 + x_1x_3x_4 + x_2x_4^2.$$
\end{proof}

%\neon{We have already seen (cf. Exemple \ref{novamente}) that the polar image $Z$  of $X=V(f)$, $f=x_0x_3^2 + x_1x_3x_4 + x_2x_4^2$, is a quadratic cone which has $l = Y^{*}$ as vertex. It may also be described as  the join between $l$ and $C$ where $C$  is a conic. Besides that, there is an isomorphism $\varphi: l\longrightarrow C$ and the dual variety $X^*\subset Z$ of $V(f)$ is the scroll surface $S(1,2)$  which has as rulings the lines $\overline{x,\varphi(x)}$, $x\in l$. We note that the scroll surface }

The following proposition is a classical result of Perazzo \cite{Pe}.  

\begin{prop}\label{classhess}
Let $X \subset \P^4$ be a cubic hypersurface. Assume that $X$ is  not a cone. The following conditions are equivalent:
\end{prop}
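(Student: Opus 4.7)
The plan is to prove the equivalences by reducing everything to Lemma \ref{idealquadratico}, via the identification of a linearly embedded $\P^2$ inside $\Sing(X)$. The argument splits into a straightforward direction (verification on the explicit polynomial from Examples \ref{simples} and \ref{novamente}) and a substantive direction (extracting the plane of singularities from the hypothesis of vanishing Hessian).

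For the easy direction, starting from $f = x_0 x_3^2 + x_1 x_3 x_4 + x_2 x_4^2$, the algebraic relation $f_0 f_2 = f_1^2$ noted in Example \ref{simples} gives vanishing Hessian; the plane $V(x_3,x_4)$ is visibly contained in $\Sing(X)$ because all $f_i$ lie in the ideal $(x_3,x_4)$; and Example \ref{novamente} identifies $X^{*}$ with the scroll $S(1,2)$. This takes care of every implication issuing from the explicit normal form.

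For the substantive direction, assume $X$ has vanishing Hessian and is not a cone. Proposition \ref{prop.hess_vs_cone} gives $Z \subsetneq (\P^4)^*$, while Proposition \ref{prop:equivalent_definitions_of_cones} forces $Z$ to span $(\P^4)^*$, so $\dim Z \in \{2,3\}$. By Theorem \ref{linearidadepolar} a general fiber of $\Phi_f$ is a union of linear subspaces of dimension $4 - \dim Z$ passing through $(T_z Z)^{*}$, and by Proposition \ref{prop:zdualiny} all these base spaces sit inside $Z^{*} \subset \Sing(X)$. I would then analyze both possibilities to exhibit a $\P^2$ in $\Sing(X)$. When $\dim Z = 2$ the fibers are planes meeting $Z^{*}$ along lines, forming a one-parameter family sweeping out $\P^4$; using that $X$ has degree three together with the focal locus description in Theorem \ref{focal locus} (applied to $\gamma = \Phi_f|_X$), I would argue that the union of these base lines, namely $Z^{*}$, must itself be a linearly embedded $\P^2$ in $\Sing(X)$. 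The case $\dim Z = 3$ should be excluded by a parallel count, since otherwise the one-parameter family of line fibers through points of the curve $Z^{*} \subset \Sing(X)$ would accumulate too many singularities on a cubic, forcing $X$ to be a cone. Once a linear $\P^2 \subset \Sing(X)$ is located, Lemma \ref{idealquadratico} directly yields the normal form.

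The main obstacle will be making rigorous the claim that $Z^{*}$ is actually a linear $\P^2$ rather than a more general surface in $\Sing(X)$. The key constraint that should force linearity is the cubic degree of $X$: the incidence structure between the one-parameter family of $\P^2$ fibers of $\Phi_f$ and the variety $Z^{*}$ is highly restricted by Bezout-type arguments, and the focal locus analysis of Theorem \ref{focal locus} compresses $Z^{*}$ into a linear subspace. Ruling out $\dim Z = 3$ cleanly will also require care, since one has to show this does not yield genuinely new non-cone examples.
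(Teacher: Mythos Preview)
Your plan has a genuine structural error in the dimension analysis. In the canonical example $f = x_0 x_3^2 + x_1 x_3 x_4 + x_2 x_4^2$ (Example \ref{novamente}), the polar image is the quadric cone $Z = V(y_0 y_2 - y_1^2)$, so $\dim Z = 3$, not $2$; and $Z^{*}$ is a \emph{conic curve} properly contained in the plane $Y = V(x_3,x_4) \subset \Sing(X)$. Thus the case $\dim Z = 3$ is precisely the one that occurs and cannot be excluded, and $Z^{*}$ is \emph{not} the linear $\P^2$ you are looking for --- it is a curve sitting strictly inside that $\P^2$. Your proposed route of forcing $Z^{*}$ itself to be linear via focal-locus arguments is therefore aimed at the wrong object.

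The paper's proof handles this by a different mechanism: it shows, using the Perazzo map and results from \cite{GRu} (their Theorem 2.5 and Propositions 2.13, 2.16), that the linear span $\langle Z^{*}\rangle$ lies in $\Sing(X)$, and hence that $Z^{*}$ cannot itself be an irreducible component of $\Sing(X)$ unless $X$ is a cone. Consequently $\Sing(X)$ has a component of dimension $\geq 2$, which Lemma \ref{lemma:maximal_singular_locus} then forces to be a linear $\P^2$, feeding into Lemma \ref{idealquadratico}. The key step you are missing is this passage from $Z^{*}$ to a strictly larger component of $\Sing(X)$; the Perazzo-map machinery (or an equivalent argument showing $\langle Z^{*}\rangle \subset \Sing(X)$) is what makes this work.

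A secondary gap: you do not treat the reducible case. The paper disposes of it first by checking that if $X = V(hq)$ with $h$ linear, $q$ quadratic, and $X$ not a cone, then $\Phi_f$ is dominant, so the Hessian does not vanish.
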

\begin{enumerate}
\item[(i)] $X$ has vanishing Hessian; 
\item[(ii)] $X^*$ is projectively equivalent to the scroll surface $S(1,2)$.
\end{enumerate}

\begin{proof}
First we will show that we can assume $X$ irreducible. Suppose that  $X=V(f)$ is reducible and  is not a cone, then $f=hq$ where $h$ is a linear form and $q$ is homogeneous of degree $2$. In this case its polar map $\Phi_f$ is dominant, as follows  by a straightforward computation. This also can be proved (when $\mathbb K=\mathbb C$) by using the following identity 
$$
d(X) = d(V(h)) + d(V(q)) + d(V(h)\cap V(q))
$$
where $d(V)$ denotes the degree of the polar map associated to $V$, see \cite[Corollary 4.3]{FM}.  Since $V(h)\cap V(q)$ is a smooth conic, recall that we are assuming $X$ is not a cone, then the right side of the identity is positive, which implies that $\Phi_f$ is dominant.  

Now we suppose that $X$ is irreducible and has vanishing Hessian. By Proposition \ref{prop:zdualiny}, we get $Z^*\subset \Sing X$. 
%The dual of the polar image $Z^{*}$ is either an irreducible component of $Y={\rm Sing}(X)$ or a proper subvariety. In the first case, Proposition \ref{prop:zdualiny} implies that $X$ is a cone.  
We can assume $\dim(Z^{*})\ge 1$, otherwise $Z$ is contained in a hyperplane and Proposition \ref{prop:equivalent_definitions_of_cones} ensures that $X$ is a cone. 

We will show that $Z^{*}$ cannot be a component of  ${\rm Sing}(X)$. Let us consider the Perazzo map
\begin{eqnarray*}
\mathcal P_{X}: \mathbb P^{N} &\dashrightarrow& \mathbb G({\rm codim} Z -1,N) \\
                                  x &\mapsto& (T_{\Phi_{f}(x)}Z)^{*}.
\end{eqnarray*} 
Since $X$ is an irreducible cubic hypersurface, the closure of a general fiber of $\mathcal P_{X}$ is a linear space, see \cite[Theorem 2.5]{GRu}. According with \cite[Proposition 2.16]{GRu} this implies that $Z^{*}$ lies in the intersection of fibers of $\mathcal P_{X}$. And from \cite[Proposition 2.13]{GRu} this is equivalent to say that the linear span $<Z^{*}>$ lies in ${\rm Sing}(X)$. Finally, this ensures that if $Z^{*}$ is a component of ${\rm Sing}(X)$ then $Z^{*}=<Z^{*}>$, which implies that $Z^{*}$ is a linear subspace. But, in this case $X$ must be a cone.

So far we have proved  that $\dim Z^*\ge 1$ and $Z^*$ cannot be component of $\Sing X$. Hence, one may assume that ${\rm Sing}(X)$ contains a two--dimensional component. It follows from  Lemma \ref{lemma:maximal_singular_locus} and Lemma \ref{idealquadratico} that $X$ is projectively equivalent to $V(f)$, where  $f=x_0x_3^2 + x_1x_3x_4 + x_2x_4^2$. This is enough to conclude that $X^* \simeq S(1,2)$, see Example \ref{novamente}.

The converse is immediate. Since all scrolls $S(1,2)$ are projectively equivalent,  $X^* \simeq S(1,2)$ implies that $X$ is projectively equivalent to $V(f)$, $f= x_0x_3^2 + x_1x_3x_4 + x_2x_4^2$.

\end{proof}

%\subsection{Developable Cubics with nonvanishing Hessian}

\subsection{Sections of the secant variety of the Veronese surface}\label{sec ver}
\neon{Linear sections of the secant variety of the Veronese surface are examples of developable cubic hypersurfaces in $\mathbb P^4$;   there are three classes of them, up to projectivity, which we now describe.}

\neon{Let $V$ be the three-dimensional vector space $\mathbb K^3$. The  algebraic group ${\rm GL}(3)$ acts on $V$ and consequently induces an action of   ${\rm PGL}(3)$ on $\mathbb P^5 = \mathbb P({\rm Sym}_2V)$. We can also consider the action of ${\rm PGL}(3)$ on the dual space   $(\mathbb P^5)^* = \mathbb P({\rm Sym}_2V^*)$ which we see as the space of conics in $\mathbb P^2$. 
%us identify $\mathbb P^2$ with $\mathbb P{\rm H}^0(\mathbb P^2, \mathcal O_{\mathbb P^2}(1))$ and $\mathbb P^5$ with $\mathbb P{\rm H}^0(\mathbb P^2, \mathcal O_{\mathbb P^2}(2))$. 
The image $\mathcal V$ of the Veronese map 
\begin{eqnarray*}
 v:  \mathbb P(V) &\longrightarrow&  \mathbb P({\rm Sym}_2V)\\
              l &\mapsto& l^2
\end{eqnarray*}
is the Veronese surface.  %
Using coordinates $(a:b:c)$ of $\mathbb P^2$, the map above can be given by
\[
(a:b:c) \mapsto (a^2: b^2: c^2: ab: ac: bc)
\]
and if we  identify $\mathbb P^5$ with the projectivization of the space of symmetric matrices, by sending each point $(x_0:\cdots :x_5)$ of $\mathbb P^5$ to 
\[
\begin{pmatrix}
	x_0 & x_3 & x_4\\ 
	x_3 & x_1 & x_5 \\
	x_4 & x_5 & x_2 
	\end{pmatrix}
\]
then $\mathcal V$ coincides with the locus of matrices of rank one. We note that its secant variety $S(\mathcal V)$ is a cubic hypersurface. Indeed, since any singular matrix can be written as a sum of two rank one matrices then we see that  $S(\mathcal V)$ corresponds to the locus of singular matrices.}

%
%Using coordinates (a:b:c) for $\mathbb P^2$, the map above can be given by
%\[
%(a:b:c) \mapsto (a^2: 2ab:2ac:b^2:2bc:c^2),
%\]
%\neon{the surface} $\mathcal V $  is given by the following ideal
%\[
%(4x_0x_3 -x_1^2 , 2x_0x_4 - x_1 x_2 , 4x_0 x_5 - x_2^2 , x_1 x_4 - 2x_2 x_3 ,2 x_1 x_5 - x_2 x_4 , 4x_3 x_5 - x_4^2)
%\]
%and 
%\[
%S(\mathcal V) = V(4x_0x_3x_5-x_0x_4^2-x_1^2x_5-x_2^2x_3+x_1x_2x_4).
%\]

\neon{We now  describe  $\mathcal V^*$ and the linear sections of $\mathcal V$.} The preimage by $v$ of a hyperplane in $\mathbb P^5$ is a conic in $\mathbb P^2$. The hyperplane is tangent to $\mathcal V$ if and only if the conic is singular, thus the dual variety \neon{$\mathcal V^*\subset (\mathbb P^5)^*$} of $\mathcal V$ is isomorphic to the locus of singular conics \neon{in $\mathbb P^2$}. \neon{Since this last coincides with the locus of singular symmetric matrices, we conclude that $\mathcal V^*\simeq S(\mathcal V)$.} A pair of distinct lines \neon{in $\mathbb P^2$} \neon{comes from}  a \neon{hyperplane} which is tangent to  $\mathcal V$  at a single point and whose intersection with \neon{$\mathcal V$} yields a pair of conics sharing a single point. A double line \neon{in $\mathbb P^2$} corresponds to a \neon{hyperplane} which is tangent along a conic.  

%\neon{We identify $(\mathbb P^5)^*$ with the projectivization of the space of symmetric matrices, by sending each point $(y_0:\cdots :y_5)$ of $(\mathbb P^5)^*$ to 
%\[
%\begin{pmatrix}
%	y_0 & y_1 & y_2\\ 
%	y_1 & y_3 & y_4 \\
%	y_2 & y_4 & y_5 
%	\end{pmatrix}.
%\]
%}  
%Since the space of conics then $\mathcal V^*\subset (\mathbb P^5)^*$ is identified with the locus of singular matrices.  Since any singular matrix can be written as a sum of two rank one matrices,  $\mathcal V^*$ coincides with the secant variety of the locus $\tilde{\mathcal V}$ formed by matrices which have rank one. Note that $\tilde{\mathcal V}$ is isomorphic to $\mathcal V$. Also, it coincides with the dual variety of the secant variety of $\mathcal V$,  $\tilde{\mathcal V}=S(\mathcal V)^*$.  

\neon{Since $\mathcal V^*\simeq S(\mathcal V)$  then by biduality we conclude that  $S(\mathcal V)^*\simeq \mathcal V$. In addition,  the  action of ${\rm PGL}(3)$ on $(\mathbb P^5)^*$ gives three orbits: 
\begin{itemize}
\item $\mathcal U_1=(\mathbb P^5)^*\backslash \neon{\mathcal V^*}$,  yielding  sections which are transverse to $\mathcal V$;
\item $\mathcal U_2=\neon{\mathcal V^*\backslash S(\mathcal V)^*}$, corresponding to sections which are tangent to $\mathcal V$ at a single point; and 
\item the closed orbit \neon{$S(\mathcal V)^*$}, giving sections which are tangent to \neon{$\mathcal V$} along a conic. 
\end{itemize}
}
\neon{Given $H\in (\mathbb P^5)^*$ we set $X=H\cap S(\mathcal V)$ for the corresponding linear section of $S(\mathcal V)$.  From the discussion above,  we obtain the following possibilities: 
\begin{enumerate}
\item if $H\in \mathcal U_1$ then $H\cap \mathcal V$ is a rational normal curve in $\mathbb P^4$ and  $X$ is  the secant variety of this curve;
\item  if $H\in \mathcal U_2$ then $H\cap \mathcal V$ is a union of two irreducible conics $C_1$ and $C_2$ sharing a single point,  and $X$ is projectively equivalent to the join $S(C_1,C_2)$ between them;
\item if $H\in \neon{S(\mathcal V)^*}$ then $X$ has vanishing Hessian and it is projectively equivalent to $V(f)$, where  $f=x_0x_3^2 + x_1x_3x_4 + x_2x_4^2$.
\end{enumerate} 
}

\neon{Our next goal is  to prove Theorem \ref{mainthm},  for this purpose we will need some preliminary lemmas.} 

\subsection{Preliminary Lemmas}
%In view of Section \ref{vanishing}, it remains to consider developable cubics in $\mathbb P^4$  with nonvanishing Hessian.  We need  some fundamental lemmas.

Let $B_{\gamma}$ be the family of linear subspaces determined by fibers of the Gauss map $\gamma$. If the dual variety $X^{*}$ of $X$ has dimension two, then $B_{\gamma}$ is a two--dimensional family of lines. Theorem \ref{focal locus} ensures that the restriction of the focal locus $\Delta$ of $X$ to a general line $L$ belonging to $B_{\gamma}$ is a divisor $\Delta_{L}$ of degree two in $L$. If $X^{*}$ has dimension one then $B_{\gamma}$ is a $1$-dimensional family of $2$-linear subspaces. Applying Theorem \ref{focal locus} again, we see that $\Delta_{L}$ is a divisor of degree one in $L$. The next results will be useful in the proof of Theorem \ref{mainthm}.

%The following lemma says that if $X$ has non--vanishing Hessian, the last case cannot happen. 

\begin{lema}\label{dim two}
Let $X\subset \mathbb P^{4}$ be an irreducible developable cubic hypersurface with nonvanishing Hessian, then $X^{*}$ has dimension $2$.
\end{lema}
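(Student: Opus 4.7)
The plan is to use the bound $\dim X^* \le \dim X - 1 = 2$ that comes from developability, and to rule out $\dim X^* \in \{0,1\}$.  The case $\dim X^* = 0$ is immediate: a point in $(\P^4)^*$ has a hyperplane as its dual, so reflexivity $X = (X^*)^*$ would force $X$ to be a hyperplane, contradicting $\deg X = 3$.  The substantive case is $\dim X^* = 1$.

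I argue by contradiction, assuming $\dim X^* = 1$.  By Theorem \ref{linearidade} the general fiber of the Gauss map is a linear space of dimension $\dim X - \dim X^* = 2$, so $X$ is swept out by a one-parameter family $B_\gamma$ of $2$-planes in $\P^4$.  Theorem \ref{focal locus} then says the focal locus $\Delta$ is contained in $\Sing(X)$, and for a general $L \in B_\gamma$ the restriction $\Delta_L\subset L\cong \P^2$ is a divisor of degree $\dim B_\gamma = 1$, hence a line.

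The key step is to show $\Delta$ is an irreducible $2$-dimensional subvariety of $\Sing(X)$.  Writing $H_\psi = \sum m_i H_i$ as a sum of irreducible components, each $H_i$ is by construction dominant over $B_\gamma$, so its restriction to a general fiber $\pi^{-1}(L)$ has positive degree; the degrees must sum to $\deg \Delta_L = 1$, forcing $H_\psi$ to consist of a single reduced component, and consequently $\Delta = \psi_*(H_\psi)$ to be irreducible.  If the lines $\Delta_L$ were all equal to a single fixed line $l\subset\P^4$, every plane $L\in B_\gamma$ would contain $l$ and $X$ would be a cone --- incompatible with nonvanishing Hessian by Proposition \ref{prop:equivalent_definitions_of_cones}; hence the lines vary with $L$ and $\Delta$ is $2$-dimensional.

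Now $\Delta$ is an irreducible $2$-dimensional component of $\Sing(X)$, so Lemma \ref{lemma:maximal_singular_locus} forces $\Delta$ to be a linear $\P^2$.  Lemma \ref{idealquadratico} then says $X$ is projectively equivalent to $V(x_0 x_3^2 + x_1 x_3 x_4 + x_2 x_4^2)$, which has vanishing Hessian by Example \ref{simples}: contradiction.  The main obstacle I foresee is precisely the book-keeping for $\Delta$ (irreducibility, correct dimension, containment in $\Sing(X)$); once that is secured, Lemmas \ref{lemma:maximal_singular_locus} and \ref{idealquadratico} close the argument.
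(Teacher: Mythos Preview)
Your proof is correct and follows essentially the same route as the paper's: rule out $\dim X^*\le 1$ by analyzing the focal locus in the $\dim X^*=1$ case, splitting according to whether the focal lines $\Delta_L$ vary (forcing a $2$-dimensional component of $\Sing(X)$ and hence vanishing Hessian via Lemmas~\ref{lemma:maximal_singular_locus} and~\ref{idealquadratico}) or are all equal to a fixed line $l$ (forcing $X$ to be a cone over $l$). Your treatment is slightly more explicit than the paper's on the irreducibility of $H_\psi$, and slightly more compressed on the cone argument; one cosmetic point is that you should say ``$\Sing(X)$ contains a $2$-dimensional component'' rather than ``$\Delta$ is a $2$-dimensional component of $\Sing(X)$'', since a priori $\Delta$ only sits inside such a component.
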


\begin{proof}
Since $X$ is not a linear subspace, then the dimension of $X^{*}$ is at least one. We will show that if $X^*$ has dimension $1$ then $X$ has vanishing Hessian. Assume that $\dim X^*=1$. For a general element $L\in B_{\gamma}$, $\Delta_{L}$ is a divisor of degree one in $L$ and  $\Delta$ is contained is ${\rm Sing}(X)$. If  $\Delta_{L}\simeq \mathbb P^{1}$ varies with $L$, then the dimension of the singular set of $X$ is at least two. Lemma \ref{lemma:maximal_singular_locus} and Lemma \ref{idealquadratico} will imply that $X$ has vanishing Hessian. If $\Delta_{L}$ is a fixed line, say $l\simeq\mathbb P^{1}$, when $L$ varies in $B_{\gamma}$ then we will show that $X$ must be a cone \neon{whose} vertex contains $l$. This contradicts our hypothesis. Given $y\in l$, let $z\in S(y,X)$ general: $z\in <y,x>$ for general $x\in X$.  We are assuming that the linear subspace $L_{x}\in B_{\gamma}$ passing trough $x$ contains $l$. In particular $<y,x>\subset L_{x}\subset X$.  This implies that $z\in X$. Since $z\in S(y,X)$ is general, we get $S(y,X)\subset X$. This shows that $S(y,X)=X$ and consequently $y\in {\rm Vert}(X)$, which is a contradiction.  Therefore $X^{*}$ has dimension $2$ and this concludes the proof.
\end{proof}

\begin{lema}\label{2p}
Let $X\subset \mathbb P^{4}$ be an irreducible developable cubic hypersurface such that the support of the focal locus $\Delta$ is an irreducible curve. Besides, assume that  $X^{*}$ has dimension two. If the restriction of $\Delta$ to a general line belonging to $B_{\gamma}$ is one point of multiplicity two, then $X$ has vanishing Hessian. 
\end{lema}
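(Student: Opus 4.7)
The strategy is to show that the hypothesis $\Delta_L = 2p_L$ forces $\Sing(X)$ to contain a linearly embedded $\P^2$, and then to invoke Lemmas \ref{lemma:maximal_singular_locus} and \ref{idealquadratico} together with Example \ref{simples} (or Proposition \ref{prop:equivalent_definitions_of_cones} if $X$ turns out to be a cone) to conclude that $X$ has vanishing Hessian.

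First, I would translate the focal hypothesis into infinitesimal language on the Grassmannian. Identifying $T_L\grass(1,4)$ with $\Hom(L,V/L)$, the focal divisor $\Delta_L$ is the degree-$2$ divisor on $L$ swept out by the kernel points of $\varphi\in\P(T_LB_\gamma)$. The hypothesis then says that every first-order deformation of $L$ inside $B_\gamma$ fixes the point $p_L\in L$ to first order; equivalently, $T_LB_\gamma\subset T_LF_{p_L}$, where $F_{p_L}\subset\grass(1,4)$ is the Schubert cycle of lines through $p_L$.

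Second, I would analyze the natural dominant morphism $\tau : B_\gamma \to C$, $L\mapsto p_L$. Since $\dim B_\gamma=2$ and $\dim C=1$, its general fiber $\tau^{-1}(p)$ is a $1$-dimensional family of lines through $p\in C$, and their union $S_p\subset X$ is a cone with vertex $p$; as $p$ runs through $C$, the cones $S_p$ cover $X$. The decisive step is to show that each $S_p$ is actually a linear $\P^2$, which I denote $\Pi_p$. Since $\gamma$ is constant along every line of $\tau^{-1}(p)$, the tangent hyperplanes $\{H_L : L\in\tau^{-1}(p)\}$ form a $1$-dimensional family in $(\P^4)^*$ all containing $p$. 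Combining the focal tangency from Step~1 with the cubic degree of $X$ should force this family to span at most a line in $(\P^4)^*$; dually, the common intersection of these hyperplanes is a linearly embedded $\P^2$ containing every line of $\tau^{-1}(p)$, hence equal to $S_p$. Thus $X$ is ruled by the $1$-parameter family of $2$-planes $\{\Pi_p\}_{p\in C}$.

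Third, letting $p$ vary along $C$, I would show that the planes $\Pi_p$ possess a common $\P^2$, obtained as the linear span $\langle C\rangle$, equivalently as the locus of infinitesimal intersections of near planes $\Pi_p$ and $\Pi_{p'}$. Because the tangent cone of $X$ at a general $p\in C$ becomes non-reduced under the plane-ruled structure (the line $L\in\tau^{-1}(p)$ of the family sits inside $\Pi_p\subset X$ to second order), $X$ has multiplicity $\geq 2$ along this linearly embedded $\P^2$, which therefore lies in $\Sing(X)$. Lemma \ref{lemma:maximal_singular_locus} then guarantees the $\P^2$ is linear, and Lemma \ref{idealquadratico} together with Example \ref{simples} conclude that $X$ is projectively equivalent to $V(x_0x_3^2+x_1x_3x_4+x_2x_4^2)$, which has vanishing Hessian. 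The main obstacle is the \emph{decisive step} of Step~2: the passage from the first-order focal tangency to the global statement that the tangent hyperplanes along $\tau^{-1}(p)$ span only a line in $(\P^4)^*$. Without the cubic-degree assumption one could imagine higher-degree cones $S_p$ ruled by nonlinear pencils of lines, so controlling this by combining the infinitesimal focal data with the cubicness of $X$ is the technical heart of the argument.
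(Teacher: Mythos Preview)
Your overall strategy --- produce a linearly embedded $\P^2$ inside $\Sing(X)$ and then invoke Lemmas \ref{lemma:maximal_singular_locus} and \ref{idealquadratico} --- matches the paper's, but the way you try to manufacture that $\P^2$ has a genuine gap.

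In Step~3 you assert that ``the planes $\Pi_p$ possess a common $\P^2$, obtained as the linear span $\langle C\rangle$''. As written this is impossible: distinct $2$-planes in $\P^4$ cannot share a common $\P^2$. Even read charitably as ``$\langle C\rangle$ is a $\P^2$ that each $\Pi_p$ meets in a line'', you give no reason why $\langle C\rangle$ should be $2$-dimensional. Nothing in Steps~1--2 rules out $C$ spanning all of $\P^4$, and the ``infinitesimal intersections of near planes'' heuristic does not by itself bound the span of $C$. This is precisely the place where the hypothesis $\Delta_L = 2p$ (as opposed to $\Delta_L = p+q$) must be used, and in your outline it never is.

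The paper closes this gap with a different device: the \emph{secant variety} $S(C)$. Since $\deg X = 3$, every secant line of $C$ lies in $X$, so $S(C)\subset X$. If $\dim S(C)=3$ then $X=S(C)$, and Terracini's Lemma forces a general $L\in B_\gamma$ (a secant line of $C$) to satisfy $\Delta_L=p+q$ with $p\neq q$, contradicting the hypothesis. Hence $\dim S(C)=2$, and then $S(C)$ is automatically a linear $\P^2$ by \cite[Proposition~1.2.2]{Ru}. The remaining work is to show $S(C)\subset\Sing(X)$; the paper does this by a tangent-space argument: if some $q\in S(C)$ were a smooth point of $X$, one checks that $T_qX$ would be constant along $S(C)$ and would contain every plane $V_x$, forcing $X$ to be a hyperplane. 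Your multiplicity sketch (``the line sits inside $\Pi_p\subset X$ to second order'') only addresses points of $C$ itself, not points of the whole $\P^2$, so it does not reach the needed conclusion.

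Regarding your Step~2, the paper does not reprove it either: it simply cites \cite[p.~454]{MT} for the fact that the cones through $x\in C$ swept by lines of $B_\gamma$ are tangent planes to $C$. So the obstacle you flag there is handled by reference, not by a new argument.
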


\begin{proof}
Let us denote by $C$ the support of $\Delta$. Given a general point $x\in C$, let $V_{x}$ be the  cone determined by lines of $B_{\gamma}$ passing through it. Notice that $X$ is a union of these $V_{x}$ when $x$ varies in $C$.  The developable hypothesis on $X$ implies that these cones are tangent planes to $C$, see \cite[p. 454]{MT}. Therefore $C$ cannot be a line, otherwise $X$ would be a cone. 

Since $X$ has degree three, the secant variety of $C$, $S(C)$ is contained in $X$.  Now we analyze the dimension of $S(C)$. If it has dimension three, then $S(C)=X$ and by Terracini Lemma  $\Delta_L$ coincides with two distinct points, this contradicts our hypothesis on $\Delta_{L}$. If the dimension of $S(C)$  equals  two, we will show that $S(C)$ is contained in the singular set of $X$, which implies that $X$ has vanishing Hessian. So, assume that
$$
{\rm dim}(S(C))=2={\rm dim}(C)+1.
$$
In this situation, $S(C)$ is a linearly embedded $\mathbb P^{2}$, see \cite[Proposition 1.2.2]{Ru}.   Suppose that  $S(C)\simeq \mathbb P^{2}$ is not contained in ${\rm Sing}(X)$. Let $q\in S(C)$ be a smooth point of $X$ and take a tangent line $l_{x}$ of $C$ at $x\in C$ passing through $q$. Since $q$ belongs to the plane $V_{x}$, the tangent space $T_{q}X$ must contain $V_{x}$. Thus, $T_{q}X$ is the join between $S(C)$ and $V_{x}$, that is 
$$
T_{q}X=S(S(C),V_{x}).
$$
Hence, the tangent space of $X$ is constant along $l_{x}$.  But, two lines $l_{x}$ and $l_{x'}$, for distinct points $x$ and $x'$, must intersect at one point. Therefore the tangent space of $X$ is constant along $S(C)$. If $H$ denotes the tangent space at one general point $q\in S(C)$, then we have
$
V_{x}\subset H
$
for a general point $x\in C$. In this case, we must have $H=X$ and this contradicts our hypothesis $\deg X = 3$.   

Therefore $S(C)\simeq \mathbb P^{2}$ is contained in ${\rm Sing}(X)$.  Lemma \ref{lemma:maximal_singular_locus} and Lemma \ref{idealquadratico} imply that $X$ has vanishing Hessian.
\end{proof}

In order to prove Theorem \ref{mainthm} we also need the following result. 

\begin{lema}\label{secQRN}
Let $C\subset \P^4$ be a non-degenerate irreducible curve whose secant variety,  $S(C)$,  is a cubic hypersurface. Then $C$ is a rational  normal quartic curve. 
\end{lema}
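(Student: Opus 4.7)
The plan is to determine the degree and arithmetic genus of $C$ by a double-projection computation, and then to conclude by the classical fact that a non-degenerate integral curve of degree $4$ in $\mathbb{P}^4$ with arithmetic genus zero is the rational normal quartic.

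Since $C$ is irreducible and spans $\mathbb{P}^4$, its degree satisfies $d := \deg C \geq 4$. To pin down $d$ and $p_a(C)$, I would compute $\deg S(C)$ via two successive projections. First, I would choose a general point $p \in \mathbb{P}^4 \setminus S(C)$: because $p$ lies on no chord of $C$ and, as tangent lines of $C$ occur as limits of chords (so the tangent variety is contained in $S(C)$), on no tangent line of $C$ either, the projection $\pi_p \colon \mathbb{P}^4 \dashrightarrow \mathbb{P}^3$ restricts to a closed immersion of $C$ onto a curve $C' \subset \mathbb{P}^3$ of the same degree and arithmetic genus. Next, I would project $C'$ from a general $q \in \mathbb{P}^3$ to a plane curve $\bar{C} \subset \mathbb{P}^2$ of degree $d$, and use the plane-curve formula $p_a(\bar{C}) = \binom{d-1}{2}$. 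The difference $\binom{d-1}{2} - p_a(C)$ records the new nodes of $\bar{C}$, which are in bijection with chords of $C$ meeting the fibre $\pi_p^{-1}(q)$; since this fibre is a general line in $\mathbb{P}^4$ and $S(C)$ is a cubic, exactly three such chords appear. Hence
\[
p_a(C) = \binom{d-1}{2} - 3.
\]

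Finally, I would invoke Castelnuovo's bound $p_a(C) \leq \pi(d,4)$ for integral non-degenerate curves in $\mathbb{P}^4$. A direct comparison---writing $d-1 = 3m + \epsilon$ with $0 \leq \epsilon \leq 2$ and $\pi(d,4) = 3\binom{m}{2} + m\epsilon$---shows that $\binom{d-1}{2} - 3 > \pi(d,4)$ for every $d \geq 5$. Combined with $d \geq 4$, this forces $d = 4$ and $p_a(C) = 0$, so $C$ is a smooth rational non-degenerate curve of degree $4$ in $\mathbb{P}^4$, i.e., projectively equivalent to the rational normal quartic.

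The main obstacle, I expect, will be the careful justification that $\pi_p|_C$ is actually an isomorphism onto $C'$ (and not merely birational), so that the arithmetic genera really agree. This hinges on the fact that a general point off $S(C)$ misses all chords and all tangent lines of $C$, supplemented by a small local argument at any singular points of $C$ to confirm that the singularity type is preserved under a generic projection, so that the computation of the node count of $\bar{C}$ via $\delta$-invariants goes through cleanly.
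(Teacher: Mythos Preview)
Your argument is correct, and the technical points you flag (that $\pi_p|_C$ is a scheme-theoretic isomorphism when $C$ may be singular, and that Castelnuovo's inequality must be invoked for the \emph{arithmetic} genus of an integral curve rather than the geometric genus of a smooth one) are genuine but manageable in characteristic zero. The paper, however, takes a shorter and rather different route. Instead of a double projection followed by a genus bound, it projects $C$ from the tangent line $L=T_xC$ at a smooth point $x\in C$ and reads the degree of the image directly off the tangent cone $\mathfrak{C}_x$ of $S(C)$ at $x$: by a result of Ciliberto--Russo, the join $S(L,C)$ is an irreducible component of $\mathfrak{C}_x$; since $\deg S(C)=3$ and $\mathfrak{C}_x$ cannot split as a union of hyperplanes (else $C$ would be degenerate), one obtains $\deg\mathfrak{C}_x=2$, hence $S(L,C)=\mathfrak{C}_x$ has degree $2$. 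Intersecting with a complementary plane shows that the projection of $C$ from $L$ is a conic, so $\deg C=4$, and the minimal-degree classification finishes the proof. Thus the paper trades your apparent-double-point count and Castelnuovo bound for a single geometric input about tangent cones of secant varieties, and never touches $p_a(C)$ at all. Your approach has the merit of using only very classical tools; the paper's is more economical and, by working at a smooth point of $C$, sidesteps the singular-curve issues you anticipate.
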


\proof
It is enough to show that $\deg C = 4$. Let $x\in C$ be a smooth point, $L=T_xC$ the tangent line at $x$ and $P=\mathbb P^2\subset\mathbb P^4$ a linear space skew to $L$, that is, $P\cap L=\emptyset$. We consider the projection $\pi:C \dashrightarrow P$ from $L$ which sends $y\in C\backslash (C\cap L)$ to 
$$
\pi(y) = <L,y>\cap P.
$$
Note that  $\tilde{C} = S(L,C)\cap P$ is the closure of the image of $C$ by $\pi$. We will show that $\tilde{C}$ has degree 2 and this implies that $C$ has degree 4.  

Let $\frak C_x$ be the tangent cone of $S(C)$ at $x$. It has $S(L,C)$ as an irreducible component, see \cite[Theorem 3.1]{CR}. Since $\deg S(C) = 3$, we get $\deg \frak C_x = 2$. But $\frak C_x$ cannot be decomposed  as product of hyperplanes, otherwise $C$ would be degenerate. This shows that $S(L,C) = \frak C_x$ and therefore $\tilde{C}$ has degree 2. This concludes the proof. 
\endproof

\subsection{Proof of Theorem \ref{mainthm}}

Assume that  $X \subset \P^4$ is a developable  irreducible  cubic hypersurface which is not a cone.  If $X$ has vanishing Hessian, then Proposition \ref{classhess} yields $X^* \simeq S(1,2)$. Therefore $X$ corresponds to a section of $S(\mathcal{V})$ which is tangent to $\mathcal{V}$ along a conic (see Section \ref{sec ver}).  
 
 Suppose that $X$ has nonvanishing Hessian. By Lemma \ref{dim two} we get that $X^{*}$ has dimension two. Thus, $B_{\gamma}$ is a two--dimensional family of lines. As consequence of Theorem \ref{focal locus}, the restriction of the focal locus to a general line $L\in B_{\gamma}$ is a divisor of degree two in $L$. 

We first remark that any irreducible component of the support $|\Delta|$ of $\Delta$ has dimension one. In fact, if there exists a zero dimensional component, say $x_{0}\in X$, then $X$ must be a cone because every line $L\in B_{\gamma}$ must pass through $x_{0}$. Besides that, since $\Delta\subset {\rm Sing}(X)$ then from Lemma \ref{lemma:maximal_singular_locus} and Lemma \ref{idealquadratico} the existence of a two--dimensional component of $|\Delta|$ will imply that $X$ has vanishing Hessian. 

The focal locus $\Delta$ is the direct image of the horizontal divisor $H_{\psi}$. Recall  that the restriction of $\psi:\mathcal I \longrightarrow X$ to $\pi^{-1}(L)$ gives an isomorphism $\pi^{-1}(L)\simeq L$, for general $L$. The restriction of $H_{\psi}$ to $\pi^{-1}(L)$  is a divisor of degree two which corresponds to $\Delta_{L}$, via this isomorphism. Therefore, the support of $H_{\psi}$ has at most two irreducible components. A fortiori, the number of irreducible components of $|\Delta|$ is at most two. 

We will see that if  $|\Delta|$ has two irreducible components then it is a linear section of $S(\mathcal V)$, $X=H\cap S(\mathcal V)$ with $H\in \mathcal V^*\setminus S(\mathcal V)^*$. Suppose  $|\Delta|$ is a union of two distinct irreducible curves, say $C_{1}$ and $C_{2}$. Hence $X$ must be the join between them, $X=S(C_{1},C_{2})$. We first remark that $C_{1}$ and $C_{2}$ are plane curves. Indeed, if for example $S(C_{1})$ has dimension $3$ then $X=S(C_{1})$ and $|\Delta|=C_1$,  contradicting our hypothesis on $\Delta$.  If $C_{1}$ and $C_{2}$ are disjoint, one has (see \cite[p. 235]{Ha})
\begin{eqnarray*}
3={\rm deg}(X)={\rm deg}(C_{1}){\rm deg}(C_{ 2})
\end{eqnarray*}
which means that at least one of these curves is a line and then $X$ is a cone.   Let us suppose that $C_{1}$ and $C_{2}$ are not disjoint and have degree at least two. The two planes containing $C_{1}$ and $C_{2}$ must share exactly one point. Otherwise, $X$ coincides with the $\mathbb P^{3}$ spanned by them. We denote by $p$ the intersection point of $C_{1}$ and $C_{2}$. Now we proceed with the same argument of \cite[p. 236 \textit{Calculation II}]{Ha}. If $\Gamma\subset \mathbb P^{4}$ is a general line, we may describe the intersection $\Gamma \cap X$ by considering a general projection $\pi_{\Gamma}:\mathbb P^{4} \dashrightarrow \mathbb P^{2}$ from $\Gamma$. Let $\tilde{C_{i}}\subset \mathbb P^{2}$ be the image of $C_{i}$ by $\pi_{\Gamma}$, $i=1,2$, and $q=\pi_{\Gamma}(p)$. The  points of $\Gamma \cap X$ correspond to the points of $\tilde{C_{1}}\cap \tilde{C_{2}}$ distinct of $q$. We note that the intersection outside $q$ is transverse, thus 
\begin{eqnarray}\label{equality}
3={\rm deg}(X)={\rm deg}(C_{1}){\rm deg}(C_{ 2})-I_{q}
\end{eqnarray}
where $I_{q}=I(\tilde{C}_{1},\tilde{C}_{2};q)$ denotes the intersection multiplicity at $q$. Since $C_{1}$ and $C_{2}$ do not share a tangent line, for a general choice of $\Gamma$ we may assume the same for $\tilde{C}_{1}$ and $\tilde{C}_{2}$. Then $I_{q}$ is the product of the algebraic multiplicity of $\tilde{C}_{1}$ and $\tilde{C}_{2}$ at $q$, say $I_{q}=a_{1}a_{2}$. By the inequality $$a_{i}\le {\rm deg}(C_{i})-1$$ and from (\ref{equality}) one obtains $${\rm deg}(C_{1})+{\rm deg}(C_{2})\le 4.$$
Hence $X$ is the join between the conics $C_1$ and $C_2$. The reader can check that $X$ is uniquely determined up to a projective transformation. We conclude that $X\simeq H\cap S(\mathcal V)$ with $H\in \mathcal V^*\setminus S(\mathcal V)^*$ and this concludes the case where $|\Delta|$ has two irreducible components.

%Let us suppose $C_{1}\simeq \mathbb P^{1}$. Given a general element $x\in<y,z>\subset X$, where $y\in C_{1}$ and $z\in C_{2}$, follows from Terracini Lemma that the tangent hyperplane $T_{x}X$ is the join between $T_{y}C_{1}=C_{1}$ and $T_{z}C_{2}$. Therefore
%$$
%C_{1}\subset {\rm Vert}(X)=\cap_{x\in X} T_{x}X.
%$$
%As a consequence, $X$ is a cone. 

Let us assume that  $|\Delta|=C$ is an irreducible curve.  If $L$ is a general line belonging to $B_{\gamma}$, we have two possibilities:
\begin{enumerate}
\item $\Delta_{L}=2p$;
\item $\Delta_{L}=p+q$, with $p\neq q$.  
\end{enumerate}
From Lemma \ref{2p},  the first case cannot happen because we are assuming that $X$ has nonvanishing Hessian.  If we are in case $(2)$, then a general line of $B_{\gamma}$ is secant to the non-degenerate curve $C$ and then $X=S(C)$. By Lemma \ref{secQRN}, $X=S(C)$ where $C$ is a rational normal curve. This corresponds to the case where $X\simeq H\cap S(\mathcal V)$ where $H\in (\mathbb P^5)^*\setminus \mathcal V^*$. This finishes the proof of Theorem \ref{mainthm}.

\section{The Lefschetz locus in $\GOR(1,5,5,1)$}\label{LL}

\subsection{Artinian Gorenstein algebras and the Lefschetz property}

Let $A=\displaystyle \bigoplus_{i=0}^dA_i,$ be  a graded Artinian  $\mathbb{K}$-algebra with $A_d\neq 0$, we say that $A$ is  
 standard graded  if $A_0=\mathbb K$ and $A$ is generated by $A_1$ as algebra.
 The integer $d$ is called the {\it socle degree} of $A$. 
The {\it codimension} of $A$ coincides with its embedding dimension, that is $\dim A_1$. If  $A=\K[X_0,\ldots,X_N]/I$ is a standard graded Artinian  $\mathbb K$-algebra, where $I$ is an ideal  with $I_1=0$, then $\operatorname{codim} A = N+1$. The {\it Hilbert vector} of $A$ is $\Hilb(A)=(1,a_1,\ldots,a_d)$, where  $a_k = \dim_{\mathbb K} A_k$.

\iffalse
$\dim A_1$, it coincides with its embedding dimension. If  $A=\K[X_0,\ldots,X_N]/I$ is a standard graded Artinian  $\mathbb K$-algebra, where $I$ is an ideal  with $I_1=0$, then $\operatorname{codim} A = N+1$.  \fi

It is a well known fact that $A$ is a Gorenstein algebra if and only if $\dim_{\K} A_d = 1$ and the restrictions of the multiplication in $A$ to complementary degrees $A_k \times A_{d-k} \to A_d$ is a perfect pairing, see \cite[Theorem 2.79]{H-W}. 
%(\rosso{motivacao? Poincare?})
For standard graded Artinian Gorenstein algebras, the Hilbert vector is symmetric: $a_i=a_{d-i}$. 

We say that a standard graded Artinian Gorenstein algebra $A$ has the {\em Strong Lefschetz Property}, or simply $A$ has the SLP,  if there exists a linear form $L\in A_1$ such that
$$\bullet L^{d-2i}: A_i \to A_{d-i}$$ is an isomorphism for every $0\leq i\leq \lfloor\frac{d}{2}\rfloor$. In this case  $L$ is called a {\it strong Lefschetz element}. %\rosso{motivacao?Hard Lefschetz Th}

The following is the model for standard graded Artinian Gorenstein algebras. 
Let $R=\K[x_0,\ldots,x_N]$ be the polynomial ring in $N+1$ indeterminates and $Q=\K[X_0, ,\ldots,X_N]$ the associated ring of differential operators. Using the classical identification $X_i : = \frac{\partial}{\partial x_i}$, the ring $R$ has a natural structure of  $Q-$module. In fact, differentiation induces a natural action $Q \times R \to R$, given by $(\alpha, f) \mapsto \alpha(f)$. Let $f \in R_d = \K[x_0,\ldots,x_N]_d$ be a homogeneous polynomial of degree $\deg(f)=d \geq 1$.
We define the {\it annihilator ideal} by
$$\Ann_f = \{\alpha \in Q\;:\; \alpha(f)=0\}\subset Q.$$
The homogeneous ideal $\Ann_f$ of $Q$ is also called the  Macaulay dual of $f$. We define $$A_f=\frac{Q}{\Ann_f}.$$
One can verify that $A_f$ is a standard graded Artinian Gorenstein $\K$-algebra of socle degree $d$, see \cite[Section 1.2]{MW}. 
We assume, without loss of generality, that $(\Ann_f)_1=0$. This is equivalent to say that the partial derivatives
 of $f$ are linearly independent, which means that $X = V(f) $ is not a cone.  

By the theory of inverse systems, we get the following characterization of standard graded Artinian Gorenstein $\K$-algebras. It is also called Macaulay-Matlis duality. For a more general discussion of Macaulay-Matlis duality see \cite[Section 2.2]{H-W}, \cite[Section 1.1]{IK}, \cite[Chapter 10]{BS} and \cite[Chapter 21]{E}.

\begin{thm}{\bf \ ( Double annihilator theorem of Macaulay)} \label{G=ANNF} \\
Let $I$ be an ideal of $Q$ such that $Q/I$  is a  standard graded Artinian  $\mathbb K$-algebra of socle degree $d$. Then
$Q/I$ is Gorenstein if and only if there exists $f\in R_d$
such that $I=\Ann_f$.
\end{thm}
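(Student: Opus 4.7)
The plan is to prove both implications by exploiting the differentiation action $Q_k\times R_d\to R_{d-k}$, $(\alpha,f)\mapsto \alpha(f)$, together with the non-degenerate apolarity pairing $Q_j\times R_j\to \mathbb K$, $(\beta,g)\mapsto \beta(g)$, which identifies $R_j$ with the linear dual of $Q_j$. Everything will reduce to translating the abstract perfect pairing expressing the Gorenstein property into a concrete annihilator condition via this duality.

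For the ``if'' direction, assume $I=\Ann_f$ with $f\in R_d$ nonzero. I would first observe that $\alpha(f)=0$ whenever $\alpha\in Q_k$ with $k>d$ for degree reasons, so $A_f$ is Artinian of socle degree at most $d$. I would then verify that the multiplication $(A_f)_k\times(A_f)_{d-k}\to(A_f)_d$ is a perfect pairing. The socle $(A_f)_d$ is one-dimensional because the map $Q_d/(\Ann_f)_d\to\mathbb K$, $[\gamma]\mapsto \gamma(f)$, is an isomorphism. Non-degeneracy then amounts to the following: if $\alpha\in Q_k$ lies outside $\Ann_f$, then $\alpha(f)\in R_{d-k}$ is nonzero, so by non-degeneracy of $Q_{d-k}\times R_{d-k}\to\mathbb K$ there is some $\beta\in Q_{d-k}$ with $(\beta\alpha)(f)=\beta(\alpha(f))\neq 0$; equivalently $[\alpha]\cdot[\beta]\neq 0$ in $(A_f)_d$.

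For the ``only if'' direction, suppose $A=Q/I$ is standard graded Artinian Gorenstein of socle degree $d$. Since $\dim A_d=1$, the subspace $I_d$ is a hyperplane in $Q_d$, and I would take $f\in R_d$ to be any nonzero element of the one-dimensional orthogonal complement $I_d^{\perp}\subset R_d$ under $Q_d\times R_d\to\mathbb K$. Then $f$ is determined up to scalar and $(\Ann_f)_d=I_d$ by construction. To conclude $\Ann_f=I$, I would compare the two ideals in each degree $k\le d$ (in higher degrees both equal $Q$). The key chain of equivalences is
\[
\alpha\in (\Ann_f)_k\iff \alpha(f)=0\iff (\beta\alpha)(f)=0\ \ \forall\,\beta\in Q_{d-k}\iff \beta\alpha\in I_d\ \ \forall\,\beta\in Q_{d-k},
\]
where the middle step uses non-degeneracy of $Q_{d-k}\times R_{d-k}\to\mathbb K$ and the last uses $(\Ann_f)_d=I_d$. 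On the other hand, the Gorenstein hypothesis says precisely that $A_k\times A_{d-k}\to A_d\cong\mathbb K$ is a perfect pairing, which rephrases as: $\alpha\in I_k$ iff $\beta\alpha\in I_d$ for every $\beta\in Q_{d-k}$. The two descriptions coincide, hence $I_k=(\Ann_f)_k$.

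The main obstacle I anticipate is in the ``only if'' direction, namely the construction of $f$ and making canonical sense of it; the identification $R_j\cong Q_j^*$ and the translation of the abstract Gorenstein perfect pairing into the concrete statement ``$I_k$ is the preimage of $I_d$ under multiplication by $Q_{d-k}$'' are the technical bridges that force the two descriptions of $I$ to match.
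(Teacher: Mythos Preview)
The paper does not give its own proof of this theorem; it is stated as background and attributed to the references \cite{H-W}, \cite{IK}, \cite{BS}, \cite{E}. So there is no in-paper argument to compare against. That said, your proposal is correct and is essentially the standard proof of Macaulay--Matlis duality for graded Artinian algebras over a field of characteristic zero. The two key ingredients you identify---non-degeneracy of the apolarity pairing $Q_j\times R_j\to\mathbb K$ (which uses $\operatorname{char}\mathbb K=0$, since on monomials it evaluates to a factorial) and the translation of the Gorenstein perfect pairing $A_k\times A_{d-k}\to A_d$ into the statement that $I_k$ is precisely the set of $\alpha\in Q_k$ with $\alpha\beta\in I_d$ for all $\beta\in Q_{d-k}$---are exactly what is needed, and your chain of equivalences is sound. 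Two small points you leave implicit but which follow immediately: that the socle degree of $A_f$ is exactly $d$ (not merely $\le d$) is a consequence of your observation that $(A_f)_d\cong\mathbb K$, and that $A_f$ is standard graded is automatic as a quotient of the polynomial ring $Q$.
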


%$\bullet L:A_i \to A_{i+1}$ has maximal rank for every $i.$ 
%We say that $A$ has the {\em Weak Lefschetz property}, or simply  $A$ has the WLP, if there exists a linear form $L\in A_1$ such that
%$$\bullet L: A_i \to A_{i+1}$$ has maximal rank for every $i$. 
%$\bullet L:A_i \to A_{i+1}$ has maximal rank for every $i.$ 

%\begin{Definition}\rm
%We say that $A$ has the {\em Strong Lefschetz property} (in short SLP) if there exists a linear form $L\in R_1$ such that 
%$\rk\mu_{i,L^d}=\min\{\dim_{\K} A_i,\dim_{\K} A_{i+d}\},$ for every $i$ and $d.$ 
%\end{Definition}

 Let $f\in R_d$ be a homogeneous  polynomial  and $A_f=\frac{Q}{\Ann_f}=\displaystyle \bigoplus_{i=0}^dA_i$ the  standard graded Artinian Gorenstein  algebra associated to $f$.
 
Let $\{\alpha_1, \ldots, \alpha_s\}$ be an ordered $\K$-basis of $A_k$, with $k \leq \frac{d}{2}$. The {\it $k$-th Hessian} of $f$ is the matrix 
$$\Hess_f^k = \left[\alpha_i(\alpha_j(f))\right]_{1\le i,j\le s}.$$
Its determinant will be denoted $\hess^k_f$. \neon{Note that  $\Hess^1_f$   is the classical Hessian $\Hess_f$.} 

The following theorem yields a connection between Lefschetz properties and higher Hessians.
%\textcolor{red}{This next result was generalized in \cite{GZ2}, for Weak Lefschetz properties. }

\begin{thm}\label{WHessian} \cite{MW} 
Consider $A_f$, where $f\in R$ is a homogeneous polynomial.
An element $L = a_0X_0+\ldots+a_NX_N\in A_1$ is a strong Lefschetz element of $A_f$  if and only if 
$$\hess^k_f(a_0,\ldots, a_N)\neq 0$$ for all $0\le k \le \lfloor d/2 \rfloor$.
\end{thm}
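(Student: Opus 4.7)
The plan is to reduce the strong Lefschetz condition---the requirement that each map $\bullet L^{d-2k}: A_k \to A_{d-k}$ be an isomorphism---to a nondegeneracy statement for a symmetric bilinear form on $A_k$, and then to identify that form, in the given basis, with a nonzero scalar multiple of $\Hess^k_f$ evaluated at $(a_0,\ldots,a_N)$.

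First I would set up the Gorenstein duality. Since $A_f$ has socle degree $d$ with $\dim_{\K} A_d = 1$, the isomorphism $A_d \cong \K$ can be fixed by sending $[\gamma] \mapsto \gamma(f)$, and the resulting pairing $A_k \times A_{d-k} \to \K$ is perfect (this is a reformulation of $A_f$ being Gorenstein, cf.\ \cite[Theorem 2.79]{H-W}). Composing the multiplication $A_k \times A_{d-k} \to A_d$ with $\bullet L^{d-2k}: A_k \to A_{d-k}$ yields the symmetric bilinear form $B^k_L : A_k \times A_k \to \K$ defined by $B^k_L(\alpha,\beta) = \alpha\beta L^{d-2k}(f)$. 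Because the Gorenstein pairing is perfect and $\dim A_k = \dim A_{d-k}$, the map $\bullet L^{d-2k}$ is an isomorphism if and only if $B^k_L$ is nondegenerate.

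The core of the argument is then to compute the Gram matrix of $B^k_L$ in the basis $\{\alpha_1,\ldots,\alpha_s\}$ and identify it, up to a nonzero constant, with $\Hess^k_f(a_0,\ldots,a_N)$. The crucial computational lemma is that for any homogeneous $g \in R$ of degree $e$ and any $L = a_0 X_0 + \cdots + a_N X_N \in Q_1$ one has $L^e(g) = e!\, g(a_0,\ldots,a_N)$. This follows from the multinomial expansion $L^e = \sum_{|\beta|=e}\binom{e}{\beta} a^\beta X^\beta$ together with the observation that $X^\beta(g)$ is a constant, equal to $\beta!$ times the coefficient of $x^\beta$ in $g$, exactly when $|\beta| = \deg g$. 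Applying this to $g = \alpha_i(\alpha_j(f))$, which is homogeneous of degree $d-2k$, and using that the differential operators $\alpha_i$, $\alpha_j$, and $L^{d-2k}$ all commute on $R$, I obtain $\alpha_i\alpha_j L^{d-2k}(f) = L^{d-2k}\!\left(\alpha_i(\alpha_j(f))\right) = (d-2k)!\, \alpha_i(\alpha_j(f))(a_0,\ldots,a_N)$, which is $(d-2k)!$ times the $(i,j)$-entry of $\Hess^k_f$ evaluated at $(a_0,\ldots,a_N)$.

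Combining the two steps, $\bullet L^{d-2k}$ is an isomorphism if and only if $\det \Hess^k_f(a_0,\ldots,a_N) = \hess^k_f(a_0,\ldots,a_N) \neq 0$. Taking the conjunction over all $0 \le k \le \lfloor d/2 \rfloor$ characterizes strong Lefschetz elements exactly as in the statement. I expect the main obstacle to be the bookkeeping in the computational lemma $L^e(g) = e!\, g(a)$ and in verifying that the relevant differential operators commute on $R$---both elementary but requiring care with multinomial coefficients; once that identity is in hand, everything else follows from Gorenstein duality.
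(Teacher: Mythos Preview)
Your argument is correct and is essentially the standard Maeno--Watanabe proof: use the perfect Gorenstein pairing to convert the isomorphism condition on $\bullet L^{d-2k}$ into nondegeneracy of the bilinear form $(\alpha,\beta)\mapsto \alpha\beta L^{d-2k}(f)$, then identify its Gram matrix with $(d-2k)!\,\Hess^k_f(a)$ via the identity $L^e(g)=e!\,g(a)$ for $g$ homogeneous of degree $e$. Note, however, that the paper does not give its own proof of this statement; it is quoted from \cite{MW}, so there is no in-paper argument to compare against. Your write-up matches the original source's approach.
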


Next we discuss the SLP for  standard graded Artinian Gorenstein algebras  of socle degree $d=3$. For such an algebra the Hilbert vector is 
$$\Hilb(A) = (1,N+1,N+1,1).$$
We denote by $\GOR(1,N+1,N+1,1)$ the space of  standard graded Artinian Gorenstein algebras of socle degree $3$.
By  Theorem \ref{WHessian},  $A_f\in \GOR(1,N+1,N+1,1)$ has the SLP if and only if $\hess_f\neq 0$.

%Let  $f \in R_3$ a cubic form, not a cone,  and $A_f$ its associated algebra.
 By \neon{the} Gordan-Noether Theorem, if $N\leq 3$, then $\hess_f=0$ if and only if $X=V(f)$ is a cone (see \cite{GN}). Therefore,
every standard  graded Artinian Gorenstein algebra of socle degree $3$ and codimension $\leq 4$ has the SLP.
When $A$ has codimension $5$, Proposition \ref{classhess} yields the following result.

%has Hilbert vector $$\Hilb(A) = (1,N+1,N+1,1).$$ 
%\textcolor{red}{It is also clear that in this case the SLP is equivalent to the WLP. } 
 
%(see \cite{Pe,GRu}). 

\begin{prop}\label{perazzoalg}
 Let $A$ be a standard graded Artinian Gorenstein $\K$-algebra of Hilbert vector $\Hilb(A) = (1,5,5,1)$.
 Assume that $A$ does not satisfy the SLP. Then $A$ is isomorphic to the following algebra
 $$\frac{\mathbb K [X_0,X_1,X_2,X_3,X_4]}{\left((X_0,X_1,X_2)^2,X_0X_4,X_2X_3,X_1X_3-X_2X_4,X_0X_3-X_1X_4,(X_3,X_4)^3\right)}.$$
\end{prop}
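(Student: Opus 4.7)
The plan is to combine Macaulay--Matlis duality with Theorem \ref{WHessian}, Proposition \ref{classhess}, and a direct computation of the annihilator.

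First I would apply Theorem \ref{G=ANNF} to identify $A$ with $A_f = Q/\Ann_f$ for some cubic form $f \in R_3$. The hypothesis $\dim A_1 = 5$ says that $(\Ann_f)_1 = 0$, and by Proposition \ref{prop:equivalent_definitions_of_cones} this means that $X = V(f) \subset \P^4$ is not a cone.

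Second, since the socle degree is $d = 3$, Theorem \ref{WHessian} tells us that $L = \sum a_i X_i$ is a strong Lefschetz element if and only if $\hess^0_f(a) \neq 0$ and $\hess^1_f(a) \neq 0$. The basis $\{1\}$ of $A_0$ yields $\hess^0_f = f$, which is not identically zero, so the failure of SLP means that the polynomial product $\hess^0_f \cdot \hess^1_f$ vanishes identically in the variables $a_i$; since $\K[x_0,\ldots,x_4]$ is an integral domain, this forces $\hess^1_f = \hess_f \equiv 0$, i.e.\ $f$ has vanishing classical Hessian. Combined with the fact that $V(f)$ is not a cone, Proposition \ref{classhess} yields that $V(f)$ is projectively equivalent to $V(f_0)$, where $f_0 = x_0 x_3^2 + x_1 x_3 x_4 + x_2 x_4^2$. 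Any projective equivalence $f = \lambda\cdot(f_0\circ \phi)$ induces, through the corresponding change of variables in $Q$, a graded isomorphism $A_f \cong A_{f_0}$, so it remains to identify $A_{f_0}$ with the quotient appearing in the statement.

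Finally, a short computation of the operators $X_i X_j(f_0)$ confirms that $(X_0,X_1,X_2)^2$, $(X_3,X_4)^3$, $X_0 X_4$, $X_2 X_3$ and two syzygies proportional to $X_1 X_3 - X_2 X_4$ and $X_0 X_3 - X_1 X_4$ lie in $\Ann_{f_0}$. After absorbing the remaining scalars by a diagonal rescaling of the coordinates (equivalently, replacing $f_0$ by the projectively equivalent polynomial $x_0 x_3^2 + 2 x_1 x_3 x_4 + x_2 x_4^2$) one obtains exactly the generators displayed in the statement. Let $I$ denote the ideal they generate; a direct monomial reduction modulo $I$ using the two syzygies shows that $\dim_{\K}(Q/I)_k = 1,5,5,1$ for $k = 0,1,2,3$, so $\dim_{\K} Q/I = 12 = \dim_{\K} A_{f_0}$. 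Since $I \subset \Ann_{f_0}$, equality follows. The only genuinely technical point is the rescaling bookkeeping needed to match coefficients; the rest of the argument is formal.
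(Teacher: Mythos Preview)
Your proof is correct and follows essentially the same route as the paper: Macaulay--Matlis duality, Theorem \ref{WHessian} to reduce failure of SLP to $\hess_f=0$, Proposition \ref{classhess} to normalize $f$, and then a direct computation of $\Ann_f$. Your added care with the diagonal rescaling (passing to $x_0x_3^2+2x_1x_3x_4+x_2x_4^2$) is a legitimate detail the paper suppresses under ``the desired isomorphism can be obtained using this explicit equation,'' since the displayed generators indeed match $\Ann_f$ only after such a change.
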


\begin{proof}
The algebra is of the form   $A_f = \K [X_0,X_1,X_2,X_3,X_4]/\Ann_f$, for some  
$f \in \K [x_0,x_1,x_2,x_3,x_4]_3$, not a cone. Theorem \ref{WHessian} implies that SLP fails if and only if $\hess_f=0$. By Proposition \ref{classhess}, we can assume that $f$ has equation $f = x_0x_3^2+x_1x_3x_4+x_2x_4^2$. The desired isomorphism can be obtained using this explicit equation. 
 % $$\Ann_f = ((X_0,X_1,X_2)^2,X_0X_4,X_2X_3,X_1X_3-X_2X_4,X_0X_3-X_1X_4,(X_3,X_4)^3).$$ 
 %The result follows. 
\end{proof}

\subsection{Jordan types}

Let $A=\displaystyle \bigoplus_{i=0}^dA_i$ be a standard graded Artinian $\K$-algebra. For $l \in A_1$ consider the map $\mu_l:A \to A$ given by $\mu_l(x)=lx$.
Since $l^{d+1}=0$,  $\mu_l$ is a nilpotent $\K$-linear map. The Jordan decomposition of such a map is given by Jordan blocks with $0$ in the diagonal, therefore it induces 
a partition of $\dim_{\K}A$ which we denote $\mathcal{J}_{A,l}$. Indeed,  
 the nilpotent linear map $\mu_l:A \to A$ induces  a direct sum decomposition of $A$ into cyclic $\mu_l$-invariant subspaces   $A = \displaystyle \bigoplus_{i=0}^m C_i$ . 
 The partition $\mathcal{J}_{A,l}$ is given by the length $k_i=\dim_{\K}C_i$.
Without loss of generality we consider the partition in a non-increasing order. 

Given a partition $P = p_1\oplus \ldots\oplus p_s$ of $\dim_{\K}A$ with $p_1 \geq \ldots \geq p_s $, we denote $P^ {\vee}$ the {\it dual partition} obtained from $P$ exchanging rows and columns in the Ferrer diagram (diagram of dots). If $P' = p'_1 \oplus \ldots \oplus p'_t$ is another partition of $\dim_{\K}A$ with $p'_1 \geq \ldots \geq p'_t $, we will write $P \preceq P'$ and say that  $P$ is less than $P'$ in the dominance order if for all $k$ we get 
$$p_1+\ldots+p_k \leq p'_1+\ldots+p'_k. $$
If the partition $P$ has repeated terms, say $f_1, f_2, \ldots, f_r$ with multiplicity $e_1, e_2, \ldots, e_r$ respectively,  we write $$P = f_1^{e_1} \oplus \ldots \oplus f_r^{e_r}.$$ 

Since $\K$ is a field of characteristic zero, there is a non empty Zariski open  subset of $\mathcal{U} \subset A_1$ where $\mathcal{J}_{A,l}$ is constant for $l \in \mathcal{U}$, we call it the {\it Jordan type} of $A$ and we denote it $\mathcal{J}_A$. 
%(see \cite[Chapter 5]{H-W}).

%The following result is well known, a proof can be found in \cite[Proposition 3.6]{H-W}. According to this result, the WLP can be described by the number %of blocks of a Jordan decomposition 
%of $\mu_l$.

%\begin{prop}\label{prop:jordanWLP}
% Suppose that $A$ is a standard graded Artinian $\K$-algebra. Then $A$ has the WLP if and only if the number of parts of $\mathcal{J}_A$ is equal to the %maximum value of its Hilbert vector 
% (called the Sperner number of $A$).
%\end{prop}

The following proposition is a special case of \cite[Proposition 3.64]{H-W}. It shows that SLP can be described by the Jordan type of $A$.

\begin{prop} Suppose that $A=\displaystyle \bigoplus_{i=0}^dA_i$ is a standard graded Artinian $\K$-algebra with $A_d \neq 0$. Then $A$ has the SLP, if and only if, $\mathcal{J}_{A} = \Hilb(A)^{\vee}$.
\end{prop}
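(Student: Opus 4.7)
My strategy is to read off the Jordan type of $\mu_l$ from the ranks of its iterates, and to match the SLP condition to the dual Hilbert partition via a single combinatorial identity.

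First I record two ingredients. For any nilpotent endomorphism with Jordan type $\lambda=(\lambda_1,\lambda_2,\ldots)$,
$$\operatorname{rank}(T^k)=\sum_i\max(\lambda_i-k,\,0),\qquad\lambda^\vee_k=\operatorname{rank}(T^{k-1})-\operatorname{rank}(T^k),$$
so $\lambda=\mathcal{J}_{A,l}$ is determined by the sequence $r_k:=\operatorname{rank}(\mu_l^k)$. Since $\mu_l^k$ sends $A_j$ into $A_{j+k}$ and $A_i=0$ for $i>d$, one has the graded-piece bound $\operatorname{rank}(\mu_l^k|_{A_j})\le\min(h_j,h_{j+k})$, which sums to $r_k\le R_k:=\sum_j\min(h_j,h_{j+k})$. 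By definition, $l$ is a strong Lefschetz element precisely when every graded-piece bound is sharp, equivalently when $r_k=R_k$ for every $k\ge 0$.

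The heart of the proof is the combinatorial identity
$$R_{k-1}-R_k\;=\;\bigl(\Hilb(A)\text{ sorted decreasingly}\bigr)_k\qquad(k\ge 1),$$
which I would derive by expanding $\min(h_j,h_{j+k})=\sum_{m\ge 1}\mathbf{1}[h_j\ge m]\,\mathbf{1}[h_{j+k}\ge m]$, swapping the order of summation, and counting, for each level set $S_m=\{j:h_j\ge m\}$, the number of $j$ with $j,j+k\in S_m$; when $S_m$ is an interval of length $\ell_m$, this count is $\max(\ell_m-k,0)$, and telescoping in $k$ gives the identity. Both hypotheses of the proposition force $\Hilb(A)$ to be unimodal (so all $S_m$ are intervals): SLP forces it because $\mu_l^2\colon A_{j-1}\to A_{j+1}$ factors through $A_j$, yielding $\min(h_{j-1},h_{j+1})\le h_j$ and ruling out strict local minima; while $\mathcal{J}_A=\Hilb(A)^\vee$ forces it because otherwise $|\Hilb(A)^\vee|^{\ge k}>R_k$ for some $k$, contradicting $r_k\le R_k$.

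Granting the identity, both directions follow. If $l$ is strong Lefschetz, then $r_k=R_k$ for all $k$, so $\lambda^\vee_k=r_{k-1}-r_k=R_{k-1}-R_k$ equals the $k$-th entry of $\Hilb(A)$ sorted decreasingly; hence $\lambda^\vee=\Hilb(A)$ as a partition, i.e., $\mathcal{J}_A=\Hilb(A)^\vee$. Conversely, if $\mathcal{J}_A=\Hilb(A)^\vee$, then for generic $l$ we have $\lambda^\vee_k=R_{k-1}-R_k$, and telescoping from $r_0=R_0=\dim A$ gives $r_k=R_k$ for every $k$, so $l$ is a strong Lefschetz element. The main obstacle is the combinatorial identity together with the unimodality reduction; both steps are elementary but require attention to the level-set structure of the Hilbert function.
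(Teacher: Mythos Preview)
The paper does not actually prove this proposition: it records it as ``a special case of \cite[Proposition 3.64]{H-W}'' and moves on. There is therefore no in-text argument to compare against. Your proposal supplies a self-contained proof via the rank sequence $r_k=\operatorname{rank}(\mu_l^k)$ and the level-set identity $R_{k-1}-R_k=(\text{sorted }\Hilb(A))_k$, and the argument is correct.

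Two small points are worth tightening. In the forward direction you compute $\mathcal{J}_{A,l}$ for a specific strong Lefschetz element $l$, whereas the statement concerns the \emph{generic} Jordan type $\mathcal{J}_A$; this is harmless because each $r_k$ is lower semicontinuous in $l$, so if one $l$ attains $r_k=R_k$ for all $k$ then so does the generic one, hence $\mathcal{J}_A=\mathcal{J}_{A,l}$. In the backward unimodality step your notation ``$|\Hilb(A)^\vee|^{\ge k}$'' is opaque; a cleaner phrasing is that if some level set $S_m$ splits into $c>1$ intervals then its contribution to $R_1$ is $\ell_m-c<\ell_m-1$, whence $R_1<\sum_m\max(\ell_m-1,0)=r_1$, contradicting $r_1\le R_1$. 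With those adjustments the proof is complete and is a genuine addition over the paper's bare citation.
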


Since the generic AG algebra  in $\GOR(1,5,5,1)$ satisfies the SLP, then the generic Jordan type  is $(1,5,5,1)^{\vee}=4^1\oplus 2^4$. 

\begin{ex}\label{example1}\rm
 %The first example considered by U. Perazzo was $X = V(f) \subset \P^4$ with $f=xu^2+yuv+zv^2$. Up to projective transformations it is the only cubic hypersurface with vanishing Hessian that is not a cone in $\P^4$. 

 The algebra $A = Q/\Ann_f$, $f= x_0x_3^2+x_1x_3x_4+x_2x_4^2$, has Hilbert vector $\Hilb(A) = (1,5,5,1)$ and Jordan type 
 $$
 \J_A = 4^1\oplus 2^3\oplus 1^2 \prec 4^1 \oplus 2^4.  $$ 
% The Figure \ref{fig:figure2} gives the String diagram for $\J_A$.

%  \begin{figure}[H]
% \vspace{0.5cm}
%\centerline{\begin{xy}
%\xymatrix@R1pt@C10pt{
%&\ar@(ul,ur){\bullet}   & \ar@(ul,ur){\bullet} \\
%&{\bullet}  \ar@{->}[r] & {\bullet}\\
%&{\bullet}  \ar@{->}[r] & {\bullet}\\
%&{\bullet}  \ar@{->}[r] & {\bullet}\\
%{\bullet}  \ar@{-}[r]&{\bullet}  \ar@{-}[r]&{\bullet}  \ar@{->}[r] & {\bullet}}
%\end{xy}}
   % \includegraphics{}
%    \centering
%    \caption{String diagram for $\J_A = 4^1\oplus 2^3\oplus 1^2$.}
%    \label{fig:figure2}
% \end{figure}
\end{ex}

The following proposition is a consequence of Proposition \ref{perazzoalg} and Example \ref{example1}.

\begin{prop}\label{jordantype}
 Let $A$ be a standard graded Artinian Gorenstein $\K$-algebra of Hilbert vector $\Hilb(A) = (1,N+1,N+1,1)$. If $N \leq 3$, then $A$ has the SLP. If $N=4$, then the possible Jordan types of $A$ are:
  either $\J_A = 4^1 \oplus2^{4}$ if $A$ has the SLP or $\J_A = 4^1 \oplus2^{3} \oplus 1^2$ if the SLP fails.
\end{prop}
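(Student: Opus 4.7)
The plan is to deduce this proposition from results already in place, by case analysis on $N$.

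For $N \leq 3$, I would write $A = A_f$ with $f \in R_3$ not a cone, and invoke the classical Gordan--Noether theorem quoted just after Theorem \ref{WHessian}: for $N \leq 3$ the Hessian $\hess_f$ vanishes if and only if $V(f)$ is a cone. Hence $\hess_f \neq 0$. Since $d = 3$, the only higher Hessian appearing in Theorem \ref{WHessian} in the range $0 \leq k \leq \lfloor d/2 \rfloor$ is $\hess_f^1 = \hess_f$, so there exists $L \in A_1$ with $\hess_f(L) \neq 0$, providing a strong Lefschetz element.

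For $N = 4$, I would use the proposition preceding Example \ref{example1}, which says that $A$ has the SLP if and only if $\J_A = \Hilb(A)^\vee$; transposing the Ferrer diagram of $(1,5,5,1)$ yields $4^1 \oplus 2^4$, so this is the Jordan type of any SLP algebra in $\GOR(1,5,5,1)$. If instead the SLP fails, Proposition \ref{perazzoalg} identifies $A$ up to isomorphism with the explicit algebra $A_f$ associated to $f = x_0 x_3^2 + x_1 x_3 x_4 + x_2 x_4^2$, and Example \ref{example1} records that this algebra has Jordan type $4^1 \oplus 2^3 \oplus 1^2$. Since the Jordan type is an invariant of the isomorphism class of the graded algebra, the conclusion follows at once.

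The only genuinely computational content behind the whole proposition is the Jordan type in Example \ref{example1}. I expect the delicate point, if one were to reprove that example in detail, to be ruling out partitions that sit strictly between $4^1 \oplus 2^3 \oplus 1^2$ and $4^1 \oplus 2^4$ in the dominance order. Once the explicit presentation from Proposition \ref{perazzoalg} is written down, this reduces to computing the ranks of $\mu_L$, $\mu_L^2$ and $\mu_L^3$ for a generic $L \in A_1$ acting on the $12$-dimensional space $A$; these three rank values determine the partition uniquely, and they can be read off the explicit multiplication table coming from the Macaulay dual of $f$.
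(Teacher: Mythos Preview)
Your proposal is correct and follows essentially the same route as the paper: the paper states that the proposition is a consequence of Proposition~\ref{perazzoalg} and Example~\ref{example1}, together with the Gordan--Noether remark and the SLP $\Leftrightarrow \J_A=\Hilb(A)^\vee$ criterion already recorded in the text, which is exactly the chain of reductions you give. One tiny quibble: for $d=3$ the range $0\le k\le 1$ in Theorem~\ref{WHessian} also includes $k=0$, but $\hess_f^0=f$ is automatically nonzero at a generic point, so your focus on $\hess_f^1=\hess_f$ is justified.
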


\subsection{The Lefschetz locus in $\GOR(1,5,5,1)$}

The affine scheme $\Gor(T)$ parametrizing AG algebras with Hilbert vector $T$ was described by A. Iarrobino and V. Kanev and a great account of their work can be found in \cite{IK}. 
%In \cite[Definition 1.10]{IK} they give a general definition of the scheme $\Gor(T)$. 
In their context, $\Gor(T)$ stands for the affine cone of the projective variety denoted $\GOR(T)$ here. As we have seen, by Macaulay-Matlis duality the scheme $\GOR(1,N+1,N+1,1)$ can be identified with the parameter space of degree $3$ homogeneous polynomials $f \in \K[x_0,\ldots,x_N]$, up to scalars, such that $A_f$ has Hilbert vector $\Hilb(A_f) = (1,N+1,N+1,1)$. \neon{As we noted in the paragraph just before Theorem \ref{G=ANNF}, to ensure  $\Hilb(A_f) = (1,N+1,N+1,1)$ we assume that  $({\Ann_f})_1=0$, i.e. that $f$ in not a cone}. Therefore,  we have an identification $$\GOR(1,N+1,N+1,1) \simeq \P^{\nu(N)} \setminus \CC_N$$  
where $\nu(N) = \binom{N+3}{3}-1$ and $\CC_N$ is the parameter space of cubic cones in $\P^{N}$. In particular
$$\GOR(1,5,5,1) \simeq \P^{34} \setminus \CC_4.$$

%In \cite{IK} the authors proved that $\Gor(T)$ is an irreducible quasi-projective variety, in our case of interest it is trivial. 

By Theorem \ref{WHessian}, an AG algebra $A$ of socle degree $3$ has the SLP if and only if its dual generator $f$ satisfies $\hess_f \neq 0$. 
 Proposition \ref{jordantype} gives  a description of their Jordan types. We summarize this discussion in the following proposition.

\begin{prop}\label{GorN+1}
 The space $\GOR(1,N+1,N+1,1)$  can be identified with $\P^{\nu(N)} \setminus \CC_N$,
 where $\nu = \binom{N+3}{3}$ and $\CC_N$ is the  space of cubic cones in $\P^{N}$.
 For $N \leq 3$ all the algebras in $\GOR(1,N+1,N+1,1)$ have the SLP. For $N=4$, the locus in $\GOR(1,5,5,1)$ of algebras 
 satisfying SLP is $\GOR(1,5,5,1)\setminus Y$ where $Y$ can be identified with the locus formed by $f\in\P^{\nu(N)} \setminus \CC_N$ with vanishing Hessian.
\end{prop}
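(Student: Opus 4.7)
The plan is to package three ingredients that have already been set up in the paper: Macaulay-Matlis duality, the Hessian criterion for SLP in socle degree three, and the Gordan-Noether theorem for small codimension. The statement is a clean bookkeeping of what is by now available.

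For the first identification, I would start from the Double annihilator theorem (Theorem \ref{G=ANNF}): every standard graded AG $\K$-algebra of socle degree $3$ and codimension $\leq N+1$ is of the form $A_f=Q/\Ann_f$ for some $f\in R_3 = \K[x_0,\ldots,x_N]_3$, and $f$ is unique up to scalar. So the parameter space is naturally a locally closed subset of $\P(R_3)=\P^{\nu(N)}$, with $\nu(N)=\binom{N+3}{3}-1$. To pin down precisely which $f$ give Hilbert vector $(1,N+1,N+1,1)$, I would invoke the condition $(\Ann_f)_1=0$ discussed just before Theorem \ref{G=ANNF}: this is exactly linear independence of the partial derivatives of $f$, which by Proposition \ref{prop:equivalent_definitions_of_cones} is equivalent to $f$ not defining a cone. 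Hence the image is $\P^{\nu(N)}\setminus \CC_N$.

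For the Lefschetz part, I would apply Theorem \ref{WHessian} specialized to socle degree $d=3$: the only Hessian matrix that enters the criterion (for $0\le k \le \lfloor d/2\rfloor = 1$) is the classical one $\Hess_f$ (the case $k=0$ is trivial since $A_0=\K$). Therefore $A_f$ has SLP if and only if $\hess_f\not\equiv 0$ as a polynomial in $(a_0,\dots,a_N)$, i.e.\ if and only if $f$ has nonvanishing Hessian. When $N\leq 3$, the classical Gordan-Noether theorem says that vanishing Hessian is equivalent to being a cone; since we have already removed $\CC_N$, every point of $\GOR(1,N+1,N+1,1)$ then has the SLP. When $N=4$, the locus of $f\in\P^{\nu(4)}\setminus\CC_4$ failing SLP is exactly the locus $Y$ cut out by vanishing Hessian, and this is precisely the complement we want.

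There is no real obstacle: each implication is an application of a theorem already in hand. The only point that deserves a sentence of care is the reduction from the full Hessian criterion of Theorem \ref{WHessian} to just $\Hess^1_f$ in socle degree three, and the observation that for $N=4$ such $f$ exist (e.g.\ Example \ref{simples}, which shows $Y$ is nonempty and by Proposition \ref{classhess} consists of the projective orbit of $x_0x_3^2+x_1x_3x_4+x_2x_4^2$), so the two subcases of Proposition \ref{jordantype} actually both occur.
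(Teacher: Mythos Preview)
Your proposal is correct and follows essentially the same route as the paper: the identification via Macaulay--Matlis duality and the cone condition (Theorem~\ref{G=ANNF} and Proposition~\ref{prop:equivalent_definitions_of_cones}), the specialization of Theorem~\ref{WHessian} to socle degree three so that only $\hess^1_f=\hess_f$ matters, and then Gordan--Noether for $N\le 3$. The paper presents this proposition as a summary of the preceding discussion rather than giving a separate proof, and your argument spells out exactly that discussion.
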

%We proceed to give a more precise description
% of $\CC_4$ and $Y$, and compute  their dimension and degree. 
 %The closure of $Y$ in $\P^{\nu(N)}$ will be denoted $\mathcal I^2$ in the next section. 

\section{Parameter spaces}\label{PS}
\neon{
In this section we find  parameter spaces for the locus of  cubic cones  in $\P^4$, for the locus of cubics with vanishing Hessian and for their intersection; 
using these descriptions, we compute their dimensions and degrees.

To compute the degree our principal tools are the Segre and Chern classes  of a vector bundle. 
We refer the reader to \cite[\S3.1,\,p.\,47]{FUL}   or \cite[\S10.1 and Ch 5]{EH}  for a systematic treatment on Segre and Chern classes.

  Throughout this section  we denote by $V$ the vector space $\K^5$. Hence we have: $\P^4=\P(V)$, $R=\s{}V^* $, $\P^{34}=\P(R_3)=\P(\s 3(V^*))$.}

\subsection{Parameter space for cubic cones in $\P^4$}\label{cones}

In this section we find a parameter space for cubic cones  in $\P^4$ and compute its dimension and degree. 
%There are two approaches to the problem, the first we present is the most simple \textcolor{red}{escolhemos esse?}, but the second is interesting by historical reasons. Indeed,  the second approach uses   the first definition of cones given and worked  by (HESSE??).

%\subsubsection{First approach}
%{\textcolor{red}{Escolher um dos approachs}}
 A cubic cone in $\P^4$ is determined by a point  $x\in \P^4$ and a cubic hypersurface in the $\P^3$ projectivization of the quotient $V/x$. 
Consider the tautological sequence on $\P^4=\P(V)$:
\begin{equation}\label{tauto1}
0 \rightarrow\OX_{\P^4}(-1)\rightarrow  \mathcal O_{\P^4}\otimes V \rightarrow \mathcal{P}\rightarrow 0.
\end{equation} 
%The exponent $5$ means a direct sum of $5$ copies of $ \mathcal O_{\P^4}$.

The fiber of $\P(\mathcal{P})$ over $x\in \P^4$ can be identified   with    $\P^3=\P(V/x)$.    Therefore $\P(\s 3(\mathcal{P}^{*}))$ parametrizes  cubic hypersurfaces lying in each $\P^3$, where $\mathcal{P}^{*}$ denotes the dual  of the vector bundle  $\mathcal{P}$.  
Note that $\Fe=\s 3(\mathcal{P}^{*})$ is a subbundle of $\mathcal O_{\P^4}\otimes \s 3(V^{*})$. Thus we have two projections 

\[
\xymatrix 
{
&~
\P(\Fe) 
\ar[dl]_{p_{1}} \ar[dr]^{p_{2}}&  \\
\P^4 & &\P^{34}=\P(\s 3(V^{*}))}
\]
where $p_{2}$ is generically injective and $\CC_4$ is the image of $p_{2}$. 
%Taking the open subset $U\subset \P^{34}$ formed by  cones whose vertex is a point.
We have the following result. 

\begin{prop}\label{cone4}
Let $\CC_4\subset \mathbb P^{34}$ be the space of cubic cones in $\P^4$. 
Then 
the dimension  of $\CC_4$ is $23$;  its degree  is given by the Segre class $s_{4}(\Fe)$ and is equal to
1365.

%The following assertions hold true. 
%\begin{enumerate} 
%\item $\CC_4$ is birational to a projective bundle over $\P^4$ and has dimension  $23$.
%\item The degree of $\CC_4$ is given by the Segre class $s_{4}(\Fe)$ and is equal to $1365$. 
%\end{enumerate}

\end{prop}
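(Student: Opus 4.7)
The plan is to identify $\P(\Fe)$ explicitly as the natural parameter space for cubic cones with a distinguished vertex, check that $p_2$ is birational onto its image so that $\dim\CC_4=\dim\P(\Fe)=23$, and then compute the degree as a top intersection on $\P(\Fe)$ that turns into a Segre class via the projection formula.

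First, I would verify that over each $x\in\P^4$ the fiber of $\P(\Fe)\to\P^4$ is precisely $\P(\s 3((V/x)^*))\cong\P^{19}$, and that $p_2$ sends a pair $(x,[\bar f])$ to the cubic form on $V$ obtained by pulling $\bar f$ back along $V\to V/x$. This identifies the image of $p_2$ set-theoretically with the locus of cubic cones. To see that $p_2$ is generically injective, I would argue that a generic cubic cone in $\P^4$ has exactly one vertex: the vertex is cut out by the vanishing of all partials, and by Proposition~\ref{prop:equivalent_definitions_of_cones} a cubic cone whose vertex locus has positive dimension has partial derivatives lying in a proper linear subspace of strictly smaller dimension than in the generic case, i.e.\ it lies in a proper subvariety of $\CC_4$. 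Hence $p_2$ is birational and $\dim\CC_4 = \dim\P(\Fe) = 4+19 = 23$.

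For the degree, I would observe that since $\Fe\hookrightarrow \OX_{\P^4}\otimes \s 3(V^*)$, the tautological inclusion pulls $\OX_{\P^{34}}(1)$ back to the Grothendieck tautological line bundle $\OX_{\P(\Fe)}(1)$. Let $\zeta=c_1(\OX_{\P(\Fe)}(1))$. Because $p_2$ is birational onto $\CC_4$, the degree of $\CC_4$ in $\P^{34}$ equals
\begin{equation*}
\deg \CC_4 \;=\; \int_{\P(\Fe)} \zeta^{23} \;=\; \int_{\P^4} (p_1)_*\zeta^{\,r-1+4} \;=\; \int_{\P^4} s_4(\Fe),
\end{equation*}
where $r=\operatorname{rk}\Fe=\binom{6}{3}=20$ and the last equality is the defining relation of the Segre class of the rank-$r$ bundle $\Fe$.

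It remains to evaluate $s_4(\Fe)=s_4(\s 3(\mathcal P^*))$ on $\P^4$. From \eqref{tauto1} and the Whitney formula one has
\begin{equation*}
c(\mathcal P)=\frac{1}{1-H}=1+H+H^2+H^3+H^4,
\end{equation*}
where $H$ is the hyperplane class on $\P^4$; by the splitting principle, if $\alpha_1,\dots,\alpha_4$ are the formal Chern roots of $\mathcal P$ then
\begin{equation*}
c(\s 3(\mathcal P^*))=\prod_{1\le i\le j\le k\le 4}\bigl(1-\alpha_i-\alpha_j-\alpha_k\bigr),
\end{equation*}
and one symmetric-function expansion modulo $H^5$ gives $c(\Fe)$ in terms of $H$. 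Inverting this series modulo $H^5$ produces $s(\Fe)$, and the coefficient of $H^4$ is $s_4(\Fe)$. Carrying out this expansion (the bookkeeping is the only genuine difficulty in the argument) yields $s_4(\Fe)=1365\,H^4$, hence $\deg\CC_4=1365$. The main obstacle is therefore purely computational: reducing the $20$ formal factors $1-\alpha_i-\alpha_j-\alpha_k$ to a degree-$4$ polynomial in $H$ is tedious, and I would either organize it via power-sum symmetric functions $p_\ell(\alpha)=H^\ell$ (so that every elementary symmetric function is immediate via Newton's identities) or delegate it to a symbolic computation.
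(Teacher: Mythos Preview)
Your proposal is correct and follows essentially the same route as the paper: both identify $\CC_4$ as the birational image of $\P(\Fe)$ and compute the degree as $\int_{\P^4} s_4(\Fe)$ via the projection formula, with the paper simply delegating the final evaluation to Macaulay2 rather than sketching a Chern-root expansion. One minor slip in your outline: from $c(\mathcal P)=1/(1-H)$ Newton's identities give $p_\ell(\alpha)=(-1)^{\ell+1}H^\ell$ rather than $H^\ell$, but this does not affect the method.
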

\proof
 The dimension can be computed by
 $$\dim(\CC_4)=\dim(\P(\Fe))=4+rk(\Fe)-1$$
 and $rk(\Fe)=20$.

To compute  the degree, write  $H$ for the
hyperplane class of $\P^{34}$.  We have
$p_{2}^*H=c_1\mathcal{O}_{\Fe}(1)=:h$. We may compute
$$
\deg\CC_4=\int_{\P^{34}}H^{23}\cap\CC_4=\int_{\P(\Fe)}
h^{23}=\int_{\P^4} p_{1*}(h^{23})=\int_{\P^4}
s_{4}(\Fe).
$$
Using Macaulay2 \cite{GS} we find $s_{4}(\Fe)\cap \P^4=1365$  (see the Scripts in \neon{\S}\ref{scripts}).
\endproof

\neon{This calculation  can be generalized to determine the degree of the locus $\CC_{d,n}$ of cones in the space parametrizing hypersurfaces of degree $d$ in $\P^n$. Using another characterization for cones, this degree is calculated in \cite[Proposition 7.8, p. 257]{EH}. There the authors obtain the following formula for the degree of the locus in $\P^{\binom{n+d}{n}}$ of cones of degree $d$:  $$\deg(\CC_{d,n})=\binom{\binom{n+d-1}{n}}{n}.$$}

%
%\subsubsection{Second approach}
%  
%We use Proposition \ref{prop:equivalent_definitions_of_cones} to characterize  cones in $\P^N$ using  partial derivatives: 
%$X=V(f)$ is a cone if and only if $f_{0},\dots,f_{N}$ are linearly dependent.
%
%Let $\P^n$ be the space of hypersurfaces of degree $d$ in $\P^N$ and consider the following map of fiber bundles over $\P^n$:
%$$\sigma:  \OX_{\P^n}(-1)\otimes S_{1}\to S_{d-1}$$
%given by $\sigma(f,\frac{\partial }{\partial x_i})=\frac{\partial f}{\partial x_i}$.
%
%Here $S_{k}=H^0(\P^N, \OX_{\P^N}(k))$ denotes the vector space of degree $k$ homogeneous polynomial is $\P^N$.
% 
%Putting  in this way, the space of cones is the locus in $\P^n$ where the rank of $\sigma$ is less or equal to $N$.
%On the other hand, from  the previous description of cones we know that the codimension of this locus is $m=\binom{N+d-1}{d-1}-N$. 
%In this situation we can use the class constructed in FULTON  for the scheme 
%$D_{N}(\sigma):=\{f\in \P^n \mid rk(\sigma(f))\leq N\}$. This class in $A_{m}(\P^n)$ is given by the coefficient of  $h^m$ in 
%$\frac{c( S_{d-1})}{c( \OX_{\P^n}(-1)\otimes S_{1})}=\frac{1}{(1-h)^{N+1}}$. Where $h=c_{1}(\OX_{\P^n}(1))$.
%
%In the case of cubic cones in $\P^4$ we obtain that the class of $D_{4}(\sigma)$ is given by  the coefficient of  $h^{11}$  in $\frac{1}{(1-h)^{5}}$, and this coefficient is $1365$ as expected.
%
%
%
%
%Of course all we did can be generalized to obtain parameter spaces (and its degree)  for cones of any degree in $\P^N$.
%

\subsection{Parameter space for cubics with vanishing Hessian}\label{nonormal}

This section is devoted to the description of  the parameter space for  cubic hypersurfaces with vanishing Hessian in $\P^4=\P(V)$.  
%This digression will allow us to compute dimension of the space of non normal hypersurfaces and to compute its degree as subvariety of $\P^{34}$. This last is the natural parameter space for cubic hypersurfaces in $\P^4$.
 
Let $\mathcal{H}\subset \P^{34}$ be the locus of hypersurfaces with vanishing Hessian. 
%It has two irreducible componets:  $\mathcal{C}_4$ and $\overline{\mathcal{H}\setminus \mathcal{C}_4}$. 
As in Proposition \ref{GorN+1} we denote  $Y=\mathcal{H}\setminus \mathcal{C}_4$.

On the other hand, let  $\mathcal K\subset \P^{34}$ be the locus formed by $f\in \P^{34}$ such that  $f\in I^2_W$, where $I_W$ is the ideal of  some  $2$-plane $W\subset \P^4$. It is an irreducible subvariety of $\P^{34}$, actually we will show below that it is the image by a morphism of a projective bundle over $\mathbb G(2,4)$.
\begin{rmk}\rm  We claim that $\overline{Y}  = \mathcal K$. 
Note that $\mathcal K \subset \overline{Y}$ by Lemma \ref{idealquadratico}.  
%is an irreducible subvariety of $\P^{34}$, actually we will show below that it is the image by a morphism of a projective bundle over $\mathbb G$. 
Reciprocally  Proposition \ref{classhess} yields  $Y\subset \mathcal K$. This concludes the claim.  
%is the  orbit of $x_0x_3^2 + x_1x_3x_4 + x_2x_4^2$ by the action of ${\rm PGL}(5)$. Then $\I^2$ is irreducible  and for general  $f\in \I^2$, we have $f\in I^2_W$ where $I_W$ is the ideal of a $2$-plane $W\subset \P^4$.  
%.  $f \in \mathcal{H}\setminus \mathcal{C}_4$,  Proposition \ref{classhess}   implies that $V(g)$ is projectively equivalent to $V(f_0)$, $f_0= x_0x_3^2 + x_1x_3x_4 + x_2x_4^2$. In particular,  $\I^2$ is irreducible and $f\in  I_W^2$, where $I_W$ is the ideal of a $2$-plane $W\subset \P^4$. 
\end{rmk}

We shall find a parameter space for $\overline{Y}$ using the above characterization. 
% we begin by consider a parameter space for planes in $\P^4$,
 Let $\grass=\grass(2,4)$ denote the Grassmannian of $2$-planes in $\P^4$.
We have the following tautological sequence on  $\grass$:
\begin{equation}\label{tauto}
0 \rightarrow\T\rightarrow \mathcal O_{\grass}\otimes V \rightarrow Q\rightarrow 0
\end{equation} 
where $\T$ is a subbundle of rank $3$ and $Q$ is a bundle of rank $2$. 

%We note that the dual sheaf $Q^{*}$ has as fiber over a given $2$-plane $H$, the two dimensional subspace of $(\mathbb A^5)^*$ defined by  equations of $H$. For instance,  the fiber of $\T$ over  $H=V(x_{3},x_{4})$ is the two dimensional subspace of $(\mathbb A^5)^*$ spanned by $\langle x_{3},x_{4}\rangle$.
%It follows that the fiber of $R^{\vee}$ is $\langle x_{3},x_{4}\rangle$.

%By sequence (\ref{tauto}), we have an injective  map $Q^{*}\to (\mathbb A^5)^{*}$. 
%Given a $2$-plane $W$ defined as zeros of  linear forms $l_0$ and $l_1$, we note that the dual sheaf $Q^{*}$ has as fiber over  $W$ the two dimensional subspace of $V^*=(\mathbb A^5)^*$ spanned by $l_0$ and $l_1$. 
Consider  the multiplication map 
$$\varphi:\s 2Q^{*}\otimes V^{*}\to  \mathcal O_{\grass}\otimes \s 3(V^{*}).$$
It defines a map of vector bundles whose image parametrizes the set of pairs $(W,f)\in \grass\times \s 3(V^{*})$ such that $f\in I_{W}^2$. 
%\[
%\{(W,f)\mid f\in I_{W}^2\}\subset  \grass\times \s 3(V^{*})
%\] 
%where if $W$ \textcolor{red}{melhorar um pouco} is a $2$-plane, $I_{W}$ is the ideal of $W$.
%In order to describe the image of $\varphi$ we need to describe  the Kernel of $\varphi$. 

We claim that the kernel of $\varphi$ is exactly 
$\wedge^2Q^{*}\otimes Q^{*}$. Therefore we have an exact sequence  
\begin{equation}\label{varphi}\xymatrix
{
0 \rightarrow \wedge^2Q^{*}\otimes Q^{*}\rightarrow \s 2Q^{*}\otimes V^{*}\xrightarrow[]{\neon{\varphi}} \Ee\rightarrow 0}
\end{equation} 
where $\Ee={\rm Im}\varphi$.  

Let us prove the claim. Consider the following exact diagram

\begin{equation}\label{N}\xymatrix
{\wedge^2Q^{*}\otimes Q^{*}\ar@{>->}[r]\ar@{>->}[d]& Ker\varphi \ar@{->>}[r] \ar@{>->}[d]&  
Ker\bar{\varphi}\ar@{>->}[d]\\
\s 2Q^{*}\otimes Q^{*} \ \,
\ar@{>->}[r] \ar@{->>}[d]^{m}&
\s 2Q^{*}\otimes V^{*}
\ar@{->>}[r]\ar@{->>}[d]_\varphi
& \s 2Q^{*}\otimes \T^{*}
\ar@{->>}[d]^{\bar{\varphi}}
 \\
\s 3Q^{*}\ \,
\ar@{>->}[r]& Im\varphi
\ar@{->>}[r]&  \,Im\bar{\varphi},
}
\end{equation}

 To prove the claim it  suffices to prove that $\bar{\varphi}$ is injective. \neon{Let $W\in \grass$ be a two-plane,  
as $\grass$ is a homogeneous variety, we can assume that $W=V(x_{3},x_{4})$}. Assume  $\bar{f}$ lies in the fiber of $\s 2Q^{*}\otimes \T^{*}$ over $W$, then it can be written as 
\[
\bar{f}=x_{3}^2\otimes f_{0}+x_{3}x_{4}\otimes f_{1}+x_{4}^2\otimes f_{2}
\]
where $f_{i}$ are homogeneous polynomials of degree one, $f_{i}\in \langle x_{0},x_{1},x_{2}\rangle$.
Hence  $\bar{\varphi}(\bar{f})=0$ means that $\varphi(f)$ lies in the fiber of $\s 3Q^*$, i.e.  
$x_{3}^2 f_{0}+x_{3}x_{4} f_{1}+x_{4}^2 f_{2}\in \s 3(x_3,x_4)$, therefore $f_0=f_1=f_2=0$.

We note that $\Ee$ corresponds to a vector bundle over $\grass$ whose projectivization coincides with  the 
incidence variety $$\P(\Ee)=\{(W,f)\in\grass\times \P^{34}\;:\; f\in I_{W}^2\}.$$ 
Let us denote by   $p_1:\P(\Ee) \longrightarrow \grass$ and  $p_2:\P(\Ee) \longrightarrow \P^{34}$ the natural projections. 
%\[
%\xymatrix 
%{
%&~
%\P(\Ee) 
%\ar[dl]_{p_{1}} \ar[dr]^{p_{2}}&  \\
%\grass & &\P^{34}}
%\]
We see that $\mathcal K$ is the image of $p_{2}$ and $p_{2}$ is generically injective.
% (if an   hypersurface $X$  has     $\Sing(X)_{red} = W$ 
%for some plane $W$, this plane is unique). 
Hence one obtains the following result.

\begin{prop}\label{degI2}
The locus $\mathcal K$ is the birational image of a projective bundle over the Grassmannian $\mathbb{G}$. The  dimension of $\mathcal K$ is $18$ and its degree  is $29960$, the degree of the Segre class $s_{6}(\Ee)$ . 
\end{prop}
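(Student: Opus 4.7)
The plan is to adapt the argument of Proposition \ref{cone4} to the projective bundle $\P(\Ee)$ over $\grass=\grass(2,4)$. From the exact sequence (\ref{varphi}), the rank of $\Ee$ is $\rk(\s 2 Q^{*}\otimes V^{*})-\rk(\wedge^2 Q^{*}\otimes Q^{*})=15-2=13$; since $\dim\grass=6$, this yields $\dim\P(\Ee)=18$. That $p_2(\P(\Ee))=\mathcal K$ is immediate from the construction of $\Ee$ as the image of the multiplication map $\varphi$, whose fiber over $W$ is $I_W^2\cap \s 3 V^{*}$. Hence everything reduces to showing that $p_2$ is generically injective; from this both the dimension and the birationality statement will follow.

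For generic injectivity, I would argue that for a general $f\in\mathcal K$ the $2$-plane $W$ with $f\in I_W^2$ is unique. By Lemma \ref{idealquadratico} such a general $f$ is projectively equivalent to $x_0x_3^2+x_1x_3x_4+x_2x_4^2$, whose singular scheme has as reduction the unique $2$-plane $V(x_3,x_4)$. Since any $2$-plane $W'$ with $f\in I_{W'}^2$ must lie inside $\Sing V(f)$, this $W$ is uniquely determined by $f$.

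For the degree, write $H$ for the hyperplane class of $\P^{34}$ and set $h=c_1\mathcal O_{\Ee}(1)=p_2^{*}H$. Birationality and the projection formula then give
\[
\deg\mathcal K=\int_{\P^{34}}H^{18}\cap[\mathcal K]=\int_{\P(\Ee)}h^{18}=\int_{\grass}s_6(\Ee),
\]
where the last equality is the Segre class relation $p_{1*}h^{r-1+i}=s_i(\Ee)$ with $r=13$ and $i=6$. Using (\ref{varphi}), $s(\Ee)=c(\wedge^2 Q^{*}\otimes Q^{*})/c(\s 2 Q^{*}\otimes V^{*})$, both factors being expanded in the Chern roots of $Q$. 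Integration of the degree six component over $\grass$ by Schubert calculus, carried out in Macaulay2 in a script analogous to that of \S\ref{scripts}, yields the value $29960$.

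The main obstacle is the birationality step: the Segre class formalism and the Macaulay2 computation are essentially mechanical, but without the uniqueness of the embedded $2$-plane $W$ in the singular locus of a generic $f\in\mathcal K$, the morphism $p_2$ would acquire multiple sheets and the intersection number $\int h^{18}$ would overcount $\deg\mathcal K$ by the fiber cardinality.
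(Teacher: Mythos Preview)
Your proof is correct and follows essentially the same approach as the paper: the dimension count via $\rk\Ee=13$ and $\dim\grass=6$, the degree computation $\int_{\P(\Ee)}h^{18}=\int_{\grass}s_6(\Ee)$ via the projection formula, and the Segre class identity from sequence~(\ref{varphi}) all match. You in fact supply an argument for the generic injectivity of $p_2$ (via the uniqueness of the $2$-plane in $\Sing V(f)$ for the Perazzo cubic) that the paper only asserts in the paragraph preceding the proposition.
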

\proof
 The dimension can be computed as 
 \[
 \dim(\mathcal K)=\dim(\P(\Ee))=\dim\grass+rk(\Ee)-1.
 \] 
Since  $rk(\Ee)=13$ and  $\dim\grass=6$ the result follows.

We shall  write  $H$ for the
hyperplane class of $\P^{34}$.  We have
$p_{2}^*H=c_1\mathcal{O}_{\Ee}(1)=:h$. \neon{Using 
\cite[\S3.1,\,p.\,47]{FUL} } the degree of $\mathcal K$ is given by 
$$
\int_{\P(\Ee)}
h^{18}=\int_\grass p_{1*}(h^{18})=\int_\grass{}
s_{6}(\Ee).
$$
By sequence (\ref{varphi}), using the properties of Chern classes \cite[\S3.2,\,p.\,50]{FUL}, we have 
$$s(\Ee)=s( \s 2Q^{*}\otimes V^{*})c(\wedge^2Q^{*}\otimes Q^{*})=s( \s 2Q^{*})^5c(\wedge^2Q^{*}\otimes Q^{*}).$$ 
We can compute these characteristic classes using Macaulay2:  $s_{6}(\Ee)=29960$ (see the Scripts in \S\ref{scripts}).

\endproof

%\begin{coro}
%
%The degree of $\I^2$ is $29960$.
%\end{coro}
%
%\proof
%By sequence (\ref{varphi}) we have 
%$$s(\Ee)=s( \s 2Q^{*}\otimes V^{*})c(\wedge^2Q^{*}\otimes Q^{*})=s( \s 2Q^{*})^5c(\wedge^2Q^{*}\otimes Q^{*}).$$ 
%We can compute this characteristic classes using Macaulay2/Schubert2  and we find  $s_{6}(\Ee)=29960$.
%\endproof

\subsection{The locus $\mathcal K \cap \CC_4$}

In this section we describe the intersection $\mathcal K\cap \CC_4$.  Let us consider again the  tautological sequence  on $\grass=\grass(2,4)$:
\begin{equation*}
0 \rightarrow\T\rightarrow \mathcal O_{\grass}\otimes V\rightarrow Q\rightarrow 0.
\end{equation*} 
%
%\begin{equation}\label{tauto2}
%0 \rightarrow\T\rightarrow \grass\times V\rightarrow Q\rightarrow 0
%\end{equation} 
%Where $V:=\C^5$, $\T$ is a subbundle of rank $3$ and $Q$ is a bundle of rank $2$. 
Note that $\T$ corresponds to a vector bundle whose projectivization coincides with the  incidence variety
$$\P(\T)=\{(W,p)\in\grass\times \P^4\;:\; p\in W\} .$$
%It has dimension $8$.  
Let   $q_1:\grass\times\P^4 \longrightarrow \grass$ and  $q_2:\grass\times\P^4 \longrightarrow \P^{4}$  be the natural projections and denote by  $\pi_1=q_1|_{\P(\T)}$ and $\pi_2=q_2|_{\P(\T)}$ their restrictions to $\P(\T)$. 
 %and we have projections 
%
%\[
%\xymatrix 
%{
%&~
%\P(\T) 
%\ar[dl]_{\pi_{1}} \ar[dr]^{\pi_{2}}&  \\
%\grass & &\P^{4}.}
%\]

We will construct  a vector bundle $\Ee_1$ over $\P(\T)$ with a birational morphism $\P(\Ee_1) \longrightarrow \mathcal K\cap\CC_4$.  Consider the tautological sequence on $\P^4=\P(V)$:
\begin{equation*}
0 \rightarrow\OX_{\P^4}(-1)\rightarrow  \mathcal O_{\P^4}\otimes V\rightarrow \mathcal{P}\rightarrow 0. 
\end{equation*} 
%Using the ideas of Section \ref{nonormal} and Section \ref{cones} we construct  a vector bundle $\Ee_1$ over $\P(\T)$ such that
%$\P(\Ee_1)=\I^2\cap\CC$:
Over $\P(\T)$ we have the following multiplication map:
$$\xi:\pi_{1}^*\s 2Q^{*}\otimes \pi_{2}^*\mathcal{P}^{*}\to   \mathcal O_{\P(\T)}\otimes\s 3(V^{*}).$$
Its kernel is exactly  $\pi_{1}^*(\wedge^2Q^{*}\otimes Q^{*})$. This  can be proved using the same argument we applied to the multiplication map $\varphi$ that appears in Section \ref{nonormal}. 
Therefore we have an exact sequence  
 \begin{equation}\label{varphi2}
0 \rightarrow \pi_{1}^*(\wedge^2Q^{*}\otimes Q^{*})\rightarrow \s 2\pi_{1}^*Q^{*}\otimes \pi_{2}^*\mathcal{P}^{*}\rightarrow \Ee_{1}\rightarrow 0
\end{equation} 
where $\Ee_1={\rm Im}\xi$. Note that $\Ee_1$ defines a vector bundle whose fiber over a point $(W,p)\in \P(\T)$ is equal to  the vector space
$$\{f\in  \s 3(V^{*})\;:\; f\,\text{is a cone with vertex }\, p\,\text{and}\, f\in I_{W}^2\}.$$

Let us consider the natural projections  $p_1:\P(\Ee_1)\longrightarrow \P(\T)$ and  $p_2:\P(\Ee_1)\longrightarrow \P^{34}$.
%
%\[
%\xymatrix 
%{
%&~
%\P(\Ee_1) 
%\ar[dl]_{p_{1}} \ar[dr]^{p_{2}}&  \\
%\P(\T) & &\P^{34}}.
%\]
%  
We see that $p_{2}$ is generically  injective and its image is  $\mathcal K\cap \CC_4$. Hence one obtains the following result. 
\begin{prop}\label{degInt} $\empty$
\begin{enumerate}
\item $\mathcal K\cap \CC_4$ is a  divisor in $\mathcal K$.
\item The degree of $\mathcal K\cap \CC_4$ is $116420$. It is determined by $$3s_{6}(\Ee)+(c_{1}(\s 2 Q^{*})+c_{1}(Q))s_{5}(\Ee)\cap [\grass]$$
 where $\Ee$ is the vector bundle of Section \ref{nonormal}. 
\end{enumerate}
\end{prop}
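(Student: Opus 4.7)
The plan is to derive both items from intersection theory on the projective bundle $p_1 \colon \P(\Ee_1) \to \P(\T)$, exploiting the birationality of $p_2 \colon \P(\Ee_1) \to \mathcal K\cap\CC_4$ established in the paragraph preceding the statement. For part (1), a dimension count suffices: since $\rk(\Ee_1) = 3\cdot 4 - 2 = 10$, one obtains $\dim \P(\Ee_1) = \dim \P(\T) + 9 = 17$, which is one less than $\dim\mathcal K = 18$, so $\mathcal K \cap \CC_4$ is a divisor in $\mathcal K$.

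For part (2), I would write $\deg(\mathcal K\cap\CC_4) = \int_{\P(\Ee_1)} h^{17}$ with $h = p_2^* H$, and push forward along $p_1$ to obtain $\int_{\P(\T)} s_8(\Ee_1)$. The central step is to relate $\Ee_1$ to the pullback of $\Ee$ from $\grass$. Both the defining sequence (\ref{varphi2}) of $\Ee_1$ and the pullback of (\ref{varphi}) share the kernel $\pi_1^*(\wedge^2 Q^* \otimes Q^*)$, and their middle terms are related by the inclusion $\pi_2^*\mathcal P^* \hookrightarrow V^*$ coming from the dual of the tautological sequence on $\P^4$. A snake-lemma argument then produces
\begin{equation*}
0 \to \Ee_1 \to \pi_1^*\Ee \to N \to 0, \qquad N := \pi_1^*\s 2 Q^* \otimes \pi_2^*\OX_{\P^4}(1),
\end{equation*}
which yields $s(\Ee_1) = c(N)\cdot \pi_1^* s(\Ee)$.

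Under the identification $\P(\T) \simeq \{(W,p) : p\in W\}$, the tautological line in $\pi_1^*\T$ over $(W,p)$ is precisely $p$, so $\pi_2^*\OX_{\P^4}(1) = \OX_{\P(\T)}(1)$. Writing $\zeta := c_1(\OX_{\P(\T)}(1))$, the tensor-product formula with a line bundle gives $c_2(N) = 3\zeta^2 + 2\zeta\,\pi_1^* c_1(\s 2 Q^*) + \pi_1^* c_2(\s 2 Q^*)$ and $c_3(N) = \zeta^3 + \zeta^2 \pi_1^* c_1(\s 2 Q^*) + \zeta\,\pi_1^* c_2(\s 2 Q^*) + \pi_1^* c_3(\s 2 Q^*)$. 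In the expansion $s_8(\Ee_1) = \sum_{i} c_i(N)\cdot \pi_1^* s_{8-i}(\Ee)$, only $i\in\{2,3\}$ contribute, since $\rk N = 3$ forces $i\le 3$ while $\dim\grass = 6$ forces $8-i\le 6$. Using $\pi_{1*}(\zeta^2)=1$ and $\pi_{1*}(\zeta^3)=s_1(\T)=c_1(Q)$ together with the projection formula, I would compute $\pi_{1*}(c_2(N))=3$ and $\pi_{1*}(c_3(N)) = c_1(\s 2 Q^*) + c_1(Q)$, which assembles into the claimed formula. The numerical value $116420$ then follows from a Macaulay2 calculation in the spirit of the scripts of \S\ref{scripts}. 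The main obstacle is organizing the snake-lemma comparison cleanly and correctly identifying $\pi_2^*\OX_{\P^4}(1)$ with $\OX_{\P(\T)}(1)$; once these are in place, the remainder is routine.
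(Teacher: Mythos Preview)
Your proof is correct and follows the same architecture as the paper's: both use the birational morphism $p_2\colon \P(\Ee_1)\to\mathcal K\cap\CC_4$ to reduce the degree to $\int_{\P(\T)} s_8(\Ee_1)$, and both obtain $s(\Ee_1)=s(\pi_1^*\Ee)\cdot c(\mathcal G)$ with $\mathcal G=\pi_1^*\s 2 Q^*\otimes\pi_2^*\OX_{\P^4}(1)$ (your $N$) by comparing the sequences (\ref{varphi}) and (\ref{varphi2}); your snake-lemma formulation is just a more explicit version of the paper's ``putting together'' of those sequences.

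The only genuine difference is in the final pushforward from $\P(\T)$ to $\grass$. The paper works extrinsically: it embeds $\P(\T)\subset\grass\times\P^4$, identifies $[\P(\T)]=c_2(q_2^*\OX_{\P^4}(1)\otimes q_1^*Q)$ as the zero locus of a regular section, and then integrates over the product, using $k^5=0$ and the vanishing of mixed classes in degrees above~$6$. You work intrinsically: you identify $\pi_2^*\OX_{\P^4}(1)$ with the relative hyperplane class $\OX_{\P(\T)}(1)$ of the $\P^2$-bundle $\pi_1$ and push forward via $\pi_{1*}(\zeta^{2+j})=s_j(\T)$, so that $\pi_{1*}(\zeta^2)=1$ and $\pi_{1*}(\zeta^3)=c_1(Q)$. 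Your route is a bit more streamlined, bypassing the ambient product and the class computation for $[\P(\T)]$; the paper's route makes the role of the hyperplane class on $\P^4$ visible throughout. Both land on the same expression $3s_6(\Ee)+(c_1(\s 2 Q^*)+c_1(Q))s_5(\Ee)\cap[\grass]$.
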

\proof 
The dimension of $\mathcal K\cap \CC_4$  coincides with
\[
\dim(\P(\Ee_1))=\dim\P(\T)+rk(\Ee_1)-1. 
\] 
Since $rk(\Ee_1)=10$ and $\dim\P(\T)=8$ the result follows. In order to compute  the degree, write  $H$ for the
hyperplane class of $\P^{34}$.  We have
$p_{2}^*H=c_1\mathcal{O}_{\Ee_1}(1)=:h$. The degree of $\mathcal K\cap \CC_4$ is given by
$$
\int_{\P(\Ee_1)}
h^{17}=\int_{\P(\T)}  p_{1*}(h^{17})=\int_{\P(\T)}
s_{8}(\Ee_1).
$$

We claim that the following identity occurs in the Chow ring of $\grass\times \P^4$: 
 \begin{eqnarray}\label{ideT}
 [\P(\T)]=c_{2}(q_2^*\OX_{\P^4}(1)\otimes q_1^*Q)\cap [\grass\times \P^4].
 \end{eqnarray}
 Indeed, exact sequences (\ref{tauto1}) and (\ref{tauto}) yield  a map $\theta$ :
\[
\xymatrix 
{
&~
q_2^*\OX_{\P^4}(-1) 
\ar[dr]^{\theta} \ar[d]&  \\
q_1^*\T\ar[r]&\ar[r] \mathcal O_{\grass\times \P^4}\otimes V &q_1^*Q&
}
\]
%Observe that we are omitting the pullbacks.
which has  $\P(\T)$ as zeros. Then it induces a regular section 
$\sigma:\grass\times \P^4\to q_2^*\OX_{\P^4}(1) \otimes q_1^*Q$ which has $\P(\T)$ as zeros. This proves identity (\ref{ideT}) (see \cite[ex 3.2.16,\,p.\,61]{FUL}).

 Putting together sequences (\ref{varphi}) and  (\ref{varphi2}) we get 
\[
s(\Ee_1)=s(\pi_{1}^*\Ee)c(\mathcal G).
\]
%
%$$s(\Ee_1)=s(\s 2\pi_{1}^*Q^{*}\otimes \pi_{2}^*P^{*})c(\pi_{1}^*(\wedge^2Q^{*}\otimes Q^{*}))$$
%and by sequence (\ref{tauto3}) we have 
%$$s(\s 2\pi_{1}^*Q^{*}\otimes \pi_{2}^*P^{*})=s(\s 2\pi_{1}^*Q^{*}\otimes \pi_{2}^*V^{*})c(\s 2\pi_{1}^*Q^{*}\otimes \pi_{2}^*\OX_{\P^4}(1))$$
%So we have $s(\Ee_1)=s(\pi_{1}^*\Ee)c(\s 2\pi_{1}^*Q^{*}\otimes \pi_{2}^*\OX_{\P^4}(1))$, and 
Where  $\mathcal G=\s 2\pi_{1}^*Q^{*}\otimes \pi_{2}^*\OX_{\P^4}(1)$.
Using this, we deduce: 
$$s_{8}(\Ee_1)=s_{8}(\pi_{1}^*\Ee)+s_{7}(\pi_{1}^*\Ee)c_{1}(\mathcal G)+
s_{6}(\pi_{1}^*\Ee)c_{2}(\mathcal G)+s_{5}(\pi_{1}^*\Ee)c_{3}(\mathcal G).$$
We note that  $s_{i}(\pi_{1}^*\Ee)=0$ for $i>6$ because  $\Ee$ is a vector bundle over  $\grass$ which has dimension $6$. From this and (\ref{ideT})  one obtains  that $s_{8}(\Ee_1)\cap [\P(\T)]$ coincides with 
\begin{eqnarray*}
\left(s_{6}(\pi_1^*\Ee)c_{2}(\mathcal G)+s_{5}(\pi_1^*\Ee)c_{3}(\mathcal G)\right)\cap (k^2+kc_{1}(\pi_1^*Q)+c_{2}(\pi_1^*Q))\cap [\grass\times \P^4]
\end{eqnarray*}
where $k=c_{1}(q_2^*\OX_{\P^4}(1))$. 

In what follows we will omit the pull-back. Computing the Chern classes of a tensor product, we obtain 
\begin{eqnarray}\label{TP}
\left\{ \begin{array}{ll}
c_{2}(\mathcal G)=c_{2}(\s 2 Q^{*})+2c_{1}(\s 2 Q^{*})k+3k^2\\
c_{3}(\mathcal G)=c_{3}(\s 2Q^{*})+c_{2}(\s 2Q^{*})k+c_{1}(\s 2Q^{*})k^2+k^3.
\end{array} \right.
\end{eqnarray}
%
%\begin{equation*}
%c_{2}(\mathcal F)=c_{2}(\s 2 Q^{*})+2c_{1}(\s 2 Q^{*})h+3h^2
%\end{equation*}
%and
%\begin{equation*}
%c_{3}(\mathcal F)=c_{3}(\s 2Q^{*})+c_{2}(\s 2Q^{*})h+c_{1}(\s 2Q^{*})h^2+h^3.
%\end{equation*}
Observe that  \[
c_{i}(Q)s_{j}(\Ee)=c_{i}(\s 2 Q^{*})s_{j}(\Ee)=0, \;\; i+j>6.
\]  Using this and (\ref{TP}), we can reduce the computation of $s_{8}(\Ee_1)\cap [\P(\T)]$  to:
\begin{align*}
(s_{6}(\Ee)3k^2+s_{5}(\Ee)(c_{1}(\s 2Q^{*})k^2+k^3))\cap k(k+c_{1}(Q))\cap [\grass\times \P^4].
\end{align*}

%for $(i,j)\in\{(1,6),(2,5),(2,6),(3,5)\}$. 

Finally, since $k^5=0$ and $\grass$ has dimension $6$ we have 
\begin{align*}
s_{8}(\Ee_1)\cap [\P(\T)]=
3s_{6}(\Ee)+&(c_{1}(\s 2Q^{*})+c_{1}(Q))s_{5}(\Ee)\cap[\grass].
\end{align*} 

This number can be computed using Macaulay2/Schubert2, we find $116420$. This concludes the proof of  proposition.
%For the second equality we have used that $h^5=0$, and that the dimension of $\grass$ is $6$. This concludes the proof.
\endproof

Now the  Theorem \ref{mainthm2} of the introduction is \neon{a} consequence of Propositions \ref{GorN+1}, \ref{cone4}, \ref{degI2} and \ref{degInt}.

\subsection{Scripts.}\label{scripts}

\label{Scripts for Macaulay2}

\label{script}
{\small

\begin{verbatim}
loadPackage "Schubert2"
G=flagBundle({1,4})
-- Grassmannian of lines  in 5-space
(S,Q)=G.Bundles
-- names the sub and quotient bundles on G
R=dual (Q)
F=symmetricPower(3,R)
--Computes the classes in Proposition 4.1:
integral(segre(4,F))
\end{verbatim}
}

\label{script}
{\small
\begin{verbatim}

loadPackage "Schubert2"
G=flagBundle({3,2})
-- Grassmannian of 3-planes  in 5-space
(S,Q)=G.Bundles
R=dual (Q)
A=symmetricPower(2,R)
B=A^5
C=exteriorPower(2,R)*R
E=B-C
--Computes the classes in Proposition 4.3:
integral(segre(6,E))

--Computes the classes in Proposition 4.4:
integral(3*segre(6,E)+(chern(1,A)+chern(1,Q))*segre(5,E))
\end{verbatim}
}
%
%\begin{coro}
%The degree of $\I^2\cap \CC$ is $63340$.
%\end{coro}
%\proof 
%Using the proposition we compute the classes $$3s_{6}(\Ee)+(c_{1}(\s 2Q^{*})+c_{1}(Q))s_{5}(\Ee),$$ using MAPLE and we find 
%$63340$.
%\endproof

{\bf Acknowledgments}.
We wish to thank F. Russo, I. Vainsencher,  G. Staglian\`o and N. Medeiros for pointing out some mistakes in an early draft and for inspiring conversations on the subject.
%as well important comments and suggestions leading to a significant improvement of the paper.

\end{document}